\newcommand{\R}{\mathbb{R}}
\newcommand{\C}{\mathbb{C}}
\newcommand{\N}{\mathbb{N}}
\newcommand{\EE}{\mathbb{E}}
\newcommand{\M}{\mathbb{M}}
\newcommand{\de}{ \mathrm{d}}
\newcommand{\be}{\begin{equation}}
\newcommand{\ee}{\end{equation}}
\newcommand{\ben}{\begin{equation*}}
\newcommand{\een}{\end{equation*}}
\newtheorem{thm}{Theorem}
\newtheorem{cor}{Corollary}
\newtheorem{lem}{Lemma}
\theoremstyle{definition}
\theoremstyle{remark}
\newtheorem{rem}{Remark}
\newcommand{\arXiv}{arXiv:}
\title[Moments of discrete orthogonal polynomial ensembles]{Moments of discrete orthogonal polynomial ensembles
}
\author[P. Cohen]{Philip Cohen}
\address{School of Mathematics and Statistics, University College Dublin, Dublin 4, Ireland}
\author[F. D. Cunden]{Fabio Deelan Cunden}
\author[N. O'Connell]{Neil O'Connell}
\thanks{Research supported by ERC Advanced Grant 669306. Research of FDC partially supported by Gruppo Nazionale di Fisica Matematica  GNFM-INdAM.}
\begin{document}

\begin{abstract}  We obtain factorial moment identities for the Charlier, Meixner and Krawtchouk orthogonal polynomial ensembles. Building on earlier results by Ledoux [Elect. J. Probab. 10, (2005)], we find hypergeometric representations for the factorial moments when the reference measure is Poisson (Charlier ensemble) and geometric (a particular case of the Meixner ensemble). In these cases, if the number of particles is suitably randomised, the factorial moments have a polynomial property, and satisfy three-term recurrence relations and differential equations. In particular, the normalised factorial moments of the randomised ensembles are precisely related to
the moments of the corresponding equilibrium measures.  We also briefly outline how these results can be interpreted as 
Cauchy-type identities for certain Schur measures.
\end{abstract}

\maketitle

\section{Introduction and definitions}
An orthogonal polynomial ensemble is a probability measure on $\R^N$ given by
\be
\de Q(x)=\frac{1}{Z_N}\Delta(x)^2\prod_{j=1}^N\de\mu(x_j),
\label{eq:dQ}
\ee
where $x = (x_1, \dots, x_N)$, $\mu$ is a probability measure on the real line having all moments,
\be
\Delta(x)=\det_{1\leq i,j\leq N}\left(x_i^{j-1}\right)=\prod_{i<j}(x_j-x_i),
\ee 
and $Z_N$ is a normalisation constant. Orthogonal polynomial ensembles arise as joint eigenvalue distributions of certain unitarily invariant models of random matrices~\cite{Mehta67}. 
Dyson~\cite{Dyson62} showed that the measure~\eqref{eq:dQ}  also has a natural interpretation as a $2d$ Coulomb gas on the line (also known as log-gas). 
 There are a number of models from statistical physics, probability theory and combinatorics, which are described in terms of orthogonal polynomial ensembles. The reader can consult the survey article by K\"onig~\cite{Konig05}.
 
The measure~\eqref{eq:dQ} can be conveniently analysed by using the so-called \emph{orthogonal polynomial method} pioneered by Mehta~\cite{Mehta60} in the study of random matrices. Denote by $p_n(x)$, $n\in\N$, the orthonormal polynomials with respect to the measure $\mu$, i.e. $\int p_m(x)p_n(x)\de \mu(x)=\delta_{mn}$. Then, a standard calculation shows that
\be
\de Q(x)=\frac{1}{N!}\det_{1\leq i,j\leq N}K(x_i,x_j)\prod_{j=1}^N\de\mu(x_j),
\label{eq:dQ_K}
\ee
where $K(x,y)=\sum_{n=0}^{N-1}p_n(x)p_n(y)$. In fact, all the marginals~\eqref{eq:dQ_K} can be expressed as determinants of  the correlation kernel $K(x,y)$. In particular, for suitable functions $f$,
\be
\int_{\R^N}\left(\frac{1}{N}\sum_{n=1}^Nf(x_n)\right)\de Q(x_1,\dots,x_N)=\int_{\R}f(x)\de\rho_N(x),
\ee
where the normalised one-point function $\rho_N$ is the probability measure
\be
\de\rho_N(x)=\frac{1}{N}\sum_{n=0}^{N-1}p_n(x)^2\de\mu(x).
\ee
Therefore the study of the Coulomb gas $Q$ for some reference measure $\mu$ amounts to understanding properties of the associated orthogonal polynomials $p_n(x)$.
\par 
Certainly, one of the most studied quantities on orthogonal polynomial ensembles are the so-called \emph{moments}. Let $X=(X_1,\dots,X_N)$ be distributed according to the probability measure $Q$. Then, the $k$th moment of $X$ is the expectation of the $k$th power sum
\be
\EE_Q\frac{1}{N} \sum_{n=1}^NX_n^k=\int_{\R}x^k\de\rho_N(x),\quad k\in\N.
\label{eq:mom}
\ee
It is well-known that moments of orthogonal polynomial ensembles admit a $1/N$-expansion as $N\to\infty$, whose coefficients satisfy a hierarchy of conditions known as \emph{loop equations} initially derived in theoretical physics~\cite{ Ambjorn90,Ambjorn93}, and then proved using several analytical techniques~\cite{Albeverio01, Ercolani03,Haagerup12,Forrester17,Witte14,Dubrovin16,Kopelevitch18}. (These expansions are special instances of the asymptotic series and associated  topological recursions for more general probability models called $\beta$-ensembles; see~\cite{BG,CE,Eynard15,Eynard16,Guionnet15} for the state of the art.)

When the reference measure $\mu$ is the standard Gaussian, Gamma, or Beta distribution, $Q$ is the eigenvalues distribution of the Gaussian (GUE), Laguerre (LUE) or Jacobi (JUE) unitary ensembles of random matrices, with associated Hermite, Laguerre and Jacobi polynomials~\cite{Mehta67}. These classical ensembles are exactly solvable, in the sense that the coefficients of the $1/N$-expansion have an explicit description in terms of enumeration of maps (or factorisations in the symmetric group) on surfaces labelled by their genus~\cite{Hanlon92,Graczyk03,Collins03,Collins14,Mingo17,Cunden19b}. The $1/N$-expansion for the GUE, LUE and JUE is actually convergent; moments~\eqref{eq:mom} of the classical random matrix ensembles are polynomials or rational functions in the variable $N$ (the size of the matrix). 

Moreover, the moments~\eqref{eq:mom} of Gaussian, Laguerre and Jacobi unitary ensembles satisfy three-term recurrences in $k$ (the order of the moment). These remarkable recursions were first discovered by Harer and Zagier~\cite{Harer86} for the GUE, and by Haagerup and Thorbj\o rnsen for the LUE~\cite{Haagerup03} (the extension of the Haagerup-Thorbj\o rnsen recursion to moments of the inverse LUE was examined later in~\cite{Cunden16b}). Recursions in $k$ for moments of the JUE were obtained by Ledoux~\cite{Ledoux04}, and recast as three-term recurrences in~\cite{Cunden19}. 

Recent studies by Cunden, Mezzadri, O'Connell and Simm~\cite{Cunden19} clarified the origin of these three-term recurrences in $k$ for moments of the classical ensembles. Those recursions originate from the second order differential equations (continuous Sturm-Liouville problems) satisfied by the Hermite, Laguerre and Jacobi polynomials. In fact, the Harer-Zagier, Haagerup-Thorbj\o rnsen and Ledoux recursions can be interpreted as second order difference equation in the variable $k$ (discrete Sturm Liouville problems). It turns out that their solutions are hypergeometric orthogonal polynomials in $k$ belonging to the Askey scheme~\cite{Koekoek10}. The polynomial structure, the orthogonality relation and the hypergeometric representation of the moments as function of $k$ explain several nontrivial symmetries and provide new results. For instance, by duality, moments of classical classical random matrices, if suitably normalised can be view as hypergeometric  polynomials in the variable $(N-1)$ as well; therefore, they also satisfy three-term recurrences in $(N-1)$, whose nature is different from the general topological recursion. See~\cite{Cunden19}  for the details.  

This link between moments of the classical continuous orthogonal polynomial (OP) ensembles and hypergeometric OPs suggests that other related `solvable models' might exhibit unexpected representation of the moments in terms of hypergeometric series and/or special orthogonal polynomials.

In this paper we examine the classical discrete OP ensembles. Namely, the Charlier, Meixner, and Krawtchouk ensembles, which correspond to the Poisson, negative binomial, and binomial reference measures $\mu$ on $\N$, respectively.  In these ensembles the reference measure $\mu$ is discrete. 
Hence, the measure $Q$ can be thought of as a discrete Coulomb gas: a system of $N$ particles on the integer lattice $\N$, distributed according to a common distribution $\mu$ under the influence of the repelling density  $\Delta(x)^2$.
It turns out (as is often the case for integer valued random variables) that rather than the moments~\eqref{eq:mom}, the \emph{factorial moments} 
\be
\EE_Q\frac{1}{N}  \sum_{n=1}^NX_n(X_n-1)\cdots(X_n-k+1)=\int_{\R}x(x-1)\cdots(x-k+1)\de\rho_N(x),\quad k\in\N,
\label{eq:facmom}
\ee
are more natural to look at in the discrete setting. 
\par
The outline of the paper is as follows. 
In Section~\ref{sec:Led} we introduce the OP ensembles considered in this paper. Then, we give the factorial moment formulae for the Charlier and Meixner ensembles calculated by Ledoux~\cite{Ledoux05}, and we calculate the corresponding formula for the Krawtchouk ensemble, whose statement is omitted in~\cite{Ledoux05}.

In Section~\ref{sec:hyper}, motivated by recent developments~\cite{Cunden19} on moments of the classical continuous OP ensembles, we give hypergeometric representations for the factorial moments of the Charlier ensemble, and the special case of Meixner ensemble with $\gamma=1$. From these formulae it is clear that the $k$th factorial moment (suitably normalised) is a polynomial in~$k$. 

The main novel observation reported in the paper is contained in Section~\ref{sec:randomised}. It turns out that in the Charlier, and $\gamma=1$ Meixner ensembles, if the number of particles $N$ is randomised according to certain distributions, then the $k$th factorial moment has an even simpler hypergeometric representation and can be expressed as a Jacobi or Legendre polynomial of degree $k$. It follows that the randomised factorial moments satisfy three-term recursions in $k$ (similar to the Harer-Zagier, Haagerup-Thorbj\o rnsen and Ledoux recurrences in the continuous ensembles), and second order differential equations (Sturm-Liouville problems). 

In the last section of this work, we highlight a nice feature of the normalised factorial moments for the randomised 
Charlier and $\gamma=1$ Meixner ensembles, 
namely that they are precisely related to the moments of the corresponding equilibrium measures.

\noindent {\em Acknowledgement.}  Thanks to the referee for careful
reading of an earlier version and many helpful suggestions.

\section{Moment formulae for Charlier, Meixner and Krawtchouk ensembles} \label{sec:Led}
We give first the precise definitions of the three classical OP ensembles considered here. These ensembles
arise in connection with random partitions~\cite{Johansson99,Johansson00} and 
conditioned random walks~\cite{Konig05,Konig02,OC03}.

In the following we will use the hypergeometric notation
\be
 {}_pF_q \left( \begin{matrix} a_1, \dots , a_p \\ b_1 , \dots , b_q \end{matrix} ; z \right)=\sum_{i=0}^{\infty}\frac{(a_1)_i\cdots (a_p)_i}{(b_1)_i\cdots (b_q)_i}\frac{z^i}{i!}.
 \label{eq:hyp}
\ee
where $(a)_i$ denotes the rising factorial $a(a+1) \dots (a+i-1)$. If one of the parameters $a_1,\cdots,a_p$ is a nonpositive integer, then the series terminates. 
Recall that a series $ \sum_{i=0}^{\infty} t_i$ is hypergeometric if the ratio of consecutive terms $\frac{t_{i+1}}{t_i}$ is a rational function of $i$, and from~\eqref{eq:hyp} we see that
	\begin{align} \label{hyperg}
	\frac{t_{i+1}}{t_i} = \frac{(i+a_1)\dots (i+a_p)}{(i+b_1) \dots (i+b_q)(i+1)} z 
	\implies \sum_{i=0}^{\infty} t_i = {}_pF_q \left( \begin{matrix} a_1, \dots , a_p \\ b_1 , \dots , b_q \end{matrix} ; z \right) t_0.
	\end{align}

\begin{enumerate}[(a)]
\item The Charlier ensemble corresponds to a Poisson reference measure $\mu=\mu^{\theta}$ with rate $\theta>0$, given by\footnote{In this paper $0\in\N$.}
\be
\mu^{\theta}(x) = \frac{\theta^x e^{-\theta}}{x!},\quad x \in \mathbb{N}.
\label{eq:Poisson}
\ee
The $n$th normalised Charlier polynomial $c_n(x;\theta)$ can be written as~\cite[9.14]{Koekoek10}
\be
c_n(x;\theta) = \sqrt{\frac{\theta^n}{n!}} {}_2F_0 \left( \begin{matrix} -n , -x  \\ - \end{matrix} ; - \frac{1}{\theta}\right).
 \label{eq:charlier_pol}
\ee
The Charlier one-point function will be denoted by
\be
\rho_{N}^{\theta}(x) = \frac{1}{N} \sum_{n=0}^{N-1} c_n(x; \theta)^2 \mu^{\theta}(x) .
\ee
\item The Meixner ensemble corresponds to a negative binomial reference measure $\mu=\mu_q^{\gamma}$ with parameters $0<q<1$ and $\gamma>0$, where $(\gamma)_x$ denotes the rising factorial as defined above,
\be
\mu_q^{\gamma}(x) = \frac{(\gamma)_x}{x!} q^x (1-q)^{\gamma},\quad x \in \mathbb{N}.
\label{eq:neg_bin}
\ee
The $n$th normalised Meixner polynomial $m_n(x; \gamma, q)$ is~\cite[9.10]{Koekoek10}
\be
 m_n(x; \gamma, q) = \sqrt{\frac{(\gamma)_n q^n}{n!}} {}_2 F_1 \left( \begin{matrix} -n, -x  \\ \gamma \end{matrix} ; 1-\frac{1}{q} \right). 
 \label{eq:meixner_pol}
\ee
We shall denote the Meixner one-point function by
\be
\rho_{N,q}^{\gamma} (x) = \frac{1}{N} \sum_{n=0}^{N-1} m_n(x; \gamma, q)^2 \mu_q^{\gamma} (x).
\ee
\item The Krawtchouk ensemble corresponds to the binomial distribution $\mu=\mu_p^K$ with $K\geq N$ trials and success probability $0<p<1$,
\be
\mu_{K,p}(x) = \binom{K}{x}p^x (1-p)^{K-x},\quad x=0,1,2,\dots,K.
\label{eq:bin}
\ee
The $n$th normalised Krawtchouk polynomial $ k_n(x;p,K)$ has the hypergeometric representation~\cite[9.11]{Koekoek10}
\be
k_n(x;p,K) = \sqrt{\binom{K}{n} \left(\frac{p}{1-p}\right)^n} {}_2 F_1 \left( \begin{matrix} -n , -x  \\ -K \end{matrix} ; \frac{1}{p} \right), \quad n=0,1,2,\dots,K.
 \label{eq:krawtchouk_pol}
\ee
Note that, unlike in the Charlier and Meixner cases, this is a finite family of \hbox{$K+1$} polynomials.
The binomial distribution~\eqref{eq:bin} corresponds to a negative binomial distribution with negative parameters~\eqref{eq:neg_bin}
\be
\mu_{K,p}(x)=\mu_{-p/(1-p)}^{-K}(x),
\ee
and the Krawtchouk polynomials are related to the Meixner polynomials given by~\eqref{eq:meixner_pol} in the following way:
\be
k_n(x;p,K)=m_n\left(x;-K,-p/(1-p)\right).
\ee
We denote the one-point function of the Krawtchouk ensemble as
\be
\rho_{N,K,p} (x) = \frac{1}{N} \sum_{n=0}^{N-1} k_n(x;p,K)^2 \mu_{K,p} (x).
\ee
\end{enumerate}
\par

In~\cite{Ledoux05}, formulae for the factorial moments of the Charlier and Meixner ensembles are calculated. We state the results here.
\par
\begin{thm}[Ledoux~\cite{Ledoux05}] 
\label{prop:Charlier}
	Let $\rho_{N,{\theta}}(x)$ denote the Charlier one-point function. Then, the factorial moment 
	\be
M^{\theta}(k,N)=\int x(x-1)\cdots(x-k+1) \mathrm{d} \rho_{N,{\theta}}(x)
	\label{eq:Charlier_mom_def}
	\ee
	is given by
\be
M^{\theta}(k,N)
	= \sum_{i=0}^k \theta^{k-i} \binom{k}{i}^2 \frac{1}{N} \sum_{l=i}^{N-1} \frac{l!}{(l-i)!} .
	\label{eq:Charlier_mom}
	\ee
\end{thm}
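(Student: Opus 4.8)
The plan is to reduce the computation of $M^\theta(k,N)$ to a single application of the orthonormality of the Charlier polynomials, by combining a shift identity for the Poisson weight with a connection formula for the shifted argument $c_n(x+k;\theta)$. Writing $N M^\theta(k,N)=\sum_{n=0}^{N-1}\sum_{x=0}^\infty x(x-1)\cdots(x-k+1)\,c_n(x;\theta)^2\,\mu^\theta(x)$, everything hinges on disentangling the product of the falling factorial, the square of the polynomial, and the weight.

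First I would exploit the elementary identity for the Poisson weight~\eqref{eq:Poisson},
\[
x(x-1)\cdots(x-k+1)\,\mu^\theta(x)=\frac{\theta^x e^{-\theta}}{(x-k)!}=\theta^k\,\mu^\theta(x-k),
\]
valid for all $x\in\N$ (with the convention $\mu^\theta(x-k)=0$ for $x<k$). Substituting this and shifting the summation index by $y=x-k$ turns the $n$th inner sum into $\theta^k\sum_{y=0}^\infty c_n(y+k;\theta)^2\,\mu^\theta(y)$, so the falling factorial has been absorbed entirely into a shift of the argument of the orthogonal polynomial.

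The second ingredient is a connection formula expressing $c_n(x+k;\theta)$ in the basis $\{c_m(x;\theta)\}_m$. I would obtain this from the generating function of the (unnormalised) Charlier polynomials, $\sum_{n\ge 0}\frac{C_n(x;\theta)}{n!}t^n=e^t(1-t/\theta)^x$, where $c_n=\sqrt{\theta^n/n!}\,C_n$. Factoring $e^t(1-t/\theta)^{x+k}=(1-t/\theta)^k\,e^t(1-t/\theta)^x$ and reading off the coefficient of $t^n$ gives, after restoring the normalisation,
\[
c_n(x+k;\theta)=\sum_{p=0}^{\min(k,n)}\binom{k}{p}(-1)^p\,\theta^{-p/2}\sqrt{\frac{n!}{(n-p)!}}\;c_{n-p}(x;\theta).
\]
Squaring this and summing it against the weight $\mu^\theta$, the orthonormality $\sum_y c_{n-p}(y;\theta)c_{n-p'}(y;\theta)\mu^\theta(y)=\delta_{p,p'}$ collapses the double sum to the diagonal, yielding $\sum_{y}c_n(y+k;\theta)^2\mu^\theta(y)=\sum_{p=0}^{\min(k,n)}\binom{k}{p}^2\theta^{-p}\frac{n!}{(n-p)!}$.

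Finally I would multiply back by $\theta^k$, sum over $n=0,\dots,N-1$, and interchange the order of the $n$- and $p$-summations: for fixed $p$ the index $n$ runs from $p$ to $N-1$, which after relabelling $p\to i$ and $n\to l$ produces exactly~\eqref{eq:Charlier_mom}. The main obstacle is establishing the connection formula correctly; the generating-function route makes this routine, but one must track the normalisation factors $\sqrt{\theta^n/n!}$ carefully so that the powers of $\theta$ and the ratios of factorials combine into the clean coefficients above. Everything else is bookkeeping.
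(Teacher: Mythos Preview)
Your proof is correct. Note, however, that the paper does not actually prove this statement: Theorem~\ref{prop:Charlier} is quoted from Ledoux~\cite{Ledoux05} without proof. The only factorial-moment identity the paper establishes in full is the Krawtchouk case (Theorem~\ref{prop:Krawtchouk}), and the method there follows Ledoux's general scheme: an integration-by-parts/weight-shift step, an iterated forward-shift operator to expand the shifted polynomial, and then a generating-function computation to evaluate the resulting cross integrals against the new weight.

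Your argument is precisely the specialisation of this scheme to the Charlier case, with two mild streamlinings. First, you obtain the connection formula for $c_n(x+k;\theta)$ in one step from the generating function $e^t(1-t/\theta)^x$, rather than by iterating the forward-shift operator as in the Krawtchouk proof; the two routes give the same coefficients. Second, and more substantively, because the Poisson weight satisfies $x(x-1)\cdots(x-k+1)\mu^\theta(x)=\theta^k\mu^\theta(x-k)$ with the \emph{same} parameter $\theta$, the shifted polynomials $c_{n-p}(x;\theta)$ are already orthonormal with respect to the measure that appears after the shift, so the cross terms vanish by orthonormality and no separate generating-function computation is required. This is exactly the simplification that fails in the Krawtchouk case, where the weight parameter changes from $K$ to $K-k$ and one is forced into the extra work of the paper's proof of Theorem~\ref{prop:Krawtchouk}. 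So your approach is essentially Ledoux's, but the Charlier setting lets it run more cleanly than the analogue the paper actually carries out.
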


\begin{thm}[Ledoux~\cite{Ledoux05}]
 \label{prop:Meixner}

    Let $\rho_{N,q}^{\gamma} (x)$ denote the Meixner one-point function. Then, the factorial moment 
	\be
M^{\gamma}_{q}(k,N)=\int  x(x-1)\cdots(x-k+1) \mathrm{d} \rho_{N,q}^{\gamma} (x)
	\label{eq:Meixner_mom_def}
	\ee
	is given by
	\be
M^{\gamma}_{q}(k,N)=\sum_{i=0}^k  \left( \frac{q}{1-q} \right)^k q^{-i} \binom{k}{i}^2 \frac{1}{N} \sum_{l=i}^{N-1} \frac{l! }{(l-i)!}(\gamma+l)_{k-i}.
	\label{eq:Meixner_mom}	
	\ee
\end{thm}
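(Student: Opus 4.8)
The plan is to compute the factorial moment level by level. By definition of the one-point function,
\[
N\,M^{\gamma}_{q}(k,N)=\sum_{n=0}^{N-1}S_n,\qquad S_n:=\sum_{x\geq 0}x(x-1)\cdots(x-k+1)\,m_n(x;\gamma,q)^2\,\mu_q^{\gamma}(x),
\]
so it suffices to evaluate each $S_n$ and show it equals $\sum_{i=0}^k\left(\frac{q}{1-q}\right)^k q^{-i}\binom{k}{i}^2\frac{n!}{(n-i)!}(\gamma+n)_{k-i}$. The first step is the elementary weight-shift identity
\[
x(x-1)\cdots(x-k+1)\,\mu_q^{\gamma}(x)=(\gamma)_k\,q^k(1-q)^{-k}\,\mu_q^{\gamma+k}(x-k),
\]
which follows at once from $(\gamma)_x=(\gamma)_k(\gamma+k)_{x-k}$. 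After the substitution $y=x-k$ this turns $S_n$ into $(\gamma)_kq^k(1-q)^{-k}\sum_{y\geq 0}m_n(y+k;\gamma,q)^2\mu_q^{\gamma+k}(y)$, i.e.\ a squared norm of a \emph{shifted} Meixner polynomial against the \emph{shifted} weight $\mu_q^{\gamma+k}$.

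The engine is the forward-difference (ladder) relation for the normalised Meixner polynomials. Writing $\Delta f(x)=f(x+1)-f(x)$, the identity $\Delta_x(-x)_i=-i(-x)_{i-1}$ applied to the ${}_2F_1$ representation gives the one-step relation $\Delta m_n(\cdot;\gamma,q)=-(1-q)q^{-1/2}\sqrt{n/\gamma}\,m_{n-1}(\cdot;\gamma+1,q)$, hence $\Delta^a m_n(\cdot;\gamma,q)=\alpha_{n,a}\,m_{n-a}(\cdot;\gamma+a,q)$ with $\alpha_{n,a}=(-1)^a(1-q)^aq^{-a/2}\sqrt{\tfrac{n!/(n-a)!}{(\gamma)_a}}$. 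Expanding the shift $E^k=(1+\Delta)^k$ gives $m_n(y+k;\gamma,q)=\sum_{a=0}^k\binom{k}{a}\alpha_{n,a}\,m_{n-a}(y;\gamma+a,q)$, so squaring and summing produces a double sum over $\binom{k}{a}\binom{k}{b}\alpha_{n,a}\alpha_{n,b}$ times the inner products $\langle m_{n-a}(\cdot;\gamma+a,q),m_{n-b}(\cdot;\gamma+b,q)\rangle_{\mu_q^{\gamma+k}}$. The factors $\binom{k}{i}^2$ and $\frac{n!}{(n-i)!}$ appearing in the statement are exactly the diagonal contribution of these squared ladder coefficients; I would check the Charlier analogue (Theorem~\ref{prop:Charlier}) in the same way, where the computation closes immediately because the Charlier difference operator \emph{preserves} the family and orthogonality annihilates every cross term.

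The main obstacle is that, for Meixner, the difference operator simultaneously raises $\gamma\mapsto\gamma+1$. Thus the shifted polynomials $m_{n-a}(\cdot;\gamma+a,q)$ sit in \emph{different} families, and only the top one ($a=k$) is orthonormal for the weight $\mu_q^{\gamma+k}$ produced by the weight shift; the off-diagonal inner products above do \emph{not} vanish and genuinely contribute (already at $a=b=k$ the naive diagonal value fails to match, differing by a factor $(1-q)^{2k}$, which forces the off-diagonal terms). I would resolve this by re-expanding every $m_{n-a}(\cdot;\gamma+a,q)$ in the single orthonormal basis $\{m_j(\cdot;\gamma+k,q)\}_{j\geq 0}$ using the Meixner connection coefficients contiguous in $\gamma$; after this, orthogonality applies, the cross terms collapse, and summing the squared coefficients yields the rising factorial $(\gamma+n)_{k-i}$.

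Equivalently, substituting the ${}_2F_1$ form directly reduces $S_n$ to a double hypergeometric sum over the two expansion indices multiplied by the negative-binomial moment $\sum_x(-x)_k(-x)_i(-x)_{i'}\mu_q^{\gamma}(x)$, and the collapse of this double sum into the single sum carrying $\binom{k}{i}^2(\gamma+n)_{k-i}$ is a classical terminating evaluation of Chu--Vandermonde / Pfaff--Saalsch\"utz type. Either way the crux is this one summation identity; everything else is bookkeeping. As a final consistency check, since the binomial weight is the negative-binomial weight at the negative parameters $\gamma=-K$, $q=-p/(1-p)$, the (omitted) Krawtchouk factorial moments follow from the Meixner formula by analytic continuation, which simultaneously tests the computation above.
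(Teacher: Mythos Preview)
The paper does not prove this statement; it is quoted from Ledoux~\cite{Ledoux05}. The natural comparison is with the paper's own proof of the Krawtchouk analogue (Theorem~\ref{prop:Krawtchouk}), which follows Ledoux's template and whose first two moves coincide with yours: the weight shift $x(x-1)\cdots(x-k+1)\,\mu_q^\gamma(x)=(\gamma)_k\bigl(\tfrac{q}{1-q}\bigr)^{k}\mu_q^{\gamma+k}(x-k)$ and the iterated forward-shift expansion $m_n(y+k;\gamma,q)=\sum_a\binom{k}{a}\alpha_{n,a}\,m_{n-a}(y;\gamma+a,q)$. Your diagnosis of the obstacle---that the $\gamma+a$ families are not orthogonal for the weight $\gamma+k$, so the off-diagonal inner products survive---is also correct, as is your remark that in the Charlier case the forward shift preserves $\theta$ and the cross terms vanish.

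The divergence is in how those cross inner products are handled. Ledoux (and the paper, for Krawtchouk) does \emph{not} re-expand via connection coefficients: instead one inserts the generating function for each factor (the Meixner analogue of~\eqref{eq:gen_f_Kraw}), integrates the product against $\mu_q^{\gamma+k}$---which closes by the binomial theorem---and reads off every inner product $\langle m_{n-a}(\cdot;\gamma+a,q),\,m_{n-b}(\cdot;\gamma+b,q)\rangle_{\mu_q^{\gamma+k}}$ as an explicit finite sum; the resulting triple sum over $a,b$ and the generating-function index then collapses to the single sum in~\eqref{eq:Meixner_mom}. Your proposed alternatives (connection coefficients contiguous in $\gamma$, or a direct Saalsch\"utz-type evaluation of the double hypergeometric sum) are reasonable in principle, but you have not carried them out: the line ``the crux is this one summation identity'' is exactly where the content lies, and without writing that identity down you have not shown that the off-diagonal pieces really assemble into the clean factor $(\gamma+n)_{k-i}$. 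The outline is sound and close to Ledoux's, but the decisive computation is left open.
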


We can also compute the factorial moments of the Krawtchouk ensemble. 
\begin{thm}
\label{prop:Krawtchouk}
	Let $\rho_{N,K,p} (x)$ denote the Krawtchouk  one-point function. Then, the factorial moment
	\be
M_{K,p}(k,N)=\int x(x-1)\cdots(x-k+1)  \de\rho_{N,K,p} (x)
	\label{eq:Krawtchouk_mom_def}
	\ee
	is given by
\be
M_{K,p}(k,N)
	= \sum_{i=0}^k p^{k-i}\left(1-p \right)^i \binom{k}{i}^2 \frac{1}{N} \sum_{l=i}^{N-1} \frac{l! (K-l)!}{(l-i)!(K-l-k+i)!} .
		\label{eq:Krawtchouk_mom}
\ee
\end{thm}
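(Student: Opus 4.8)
The plan is to obtain the Krawtchouk moment \eqref{eq:Krawtchouk_mom} directly from Ledoux's Meixner formula \eqref{eq:Meixner_mom} by exploiting the correspondence recorded just above the statement: the binomial reference measure is a negative binomial with negative parameters, $\mu_{K,p}(x)=\mu^{-K}_{-p/(1-p)}(x)$, and the Krawtchouk polynomials are the corresponding Meixner polynomials, $k_n(x;p,K)=m_n(x;-K,-p/(1-p))$. Since the one-point function is built solely from these two objects, the Krawtchouk one-point function coincides with the Meixner one-point function evaluated at $\gamma=-K$ and $q=-p/(1-p)$, and hence $M_{K,p}(k,N)=M^{-K}_{-p/(1-p)}(k,N)$. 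It therefore suffices to substitute $\gamma=-K$ and $q=-p/(1-p)$ into \eqref{eq:Meixner_mom} and simplify.

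First I would reduce the elementary factors. With $q=-p/(1-p)$ one finds $1-q=1/(1-p)$, so that $q/(1-q)=-p$ and $(q/(1-q))^k=(-1)^k p^k$, while $q^{-i}=(-1)^i((1-p)/p)^i$; together these give the prefactor $(-1)^{k+i}p^{k-i}(1-p)^i$. Next I would rewrite the rising factorial in \eqref{eq:Meixner_mom} at $\gamma=-K$ by writing each factor as $l-K+j=-(K-l-j)$, giving
\be
(\gamma+l)_{k-i}=(l-K)_{k-i}=(-1)^{k-i}\frac{(K-l)!}{(K-l-k+i)!}.
\ee
The two sign contributions combine to $(-1)^{k+i}(-1)^{k-i}=(-1)^{2k}=1$, so all signs cancel, the prefactor collapses to $p^{k-i}(1-p)^i$, and \eqref{eq:Meixner_mom} reproduces \eqref{eq:Krawtchouk_mom} term by term.

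The step I expect to require the most care is the legitimacy of this substitution, since Ledoux established \eqref{eq:Meixner_mom} only for genuine Meixner parameters $\gamma>0$, $0<q<1$, whereas $\gamma=-K$ and $q=-p/(1-p)$ lie outside that range. I would justify it by analytic continuation in $(\gamma,q)$. For fixed $N$ and $k$ the right-hand side of \eqref{eq:Meixner_mom} is polynomial in $\gamma$ and rational in $q$; the left-hand side is also rational, because when one expands $m_n(x;\gamma,q)^2$ (a polynomial in $x$ with coefficients rational in $(\gamma,q)$) and sums against the weight, the transcendental factor $(1-q)^{\gamma}$ in $\mu^\gamma_q$ cancels against the $(1-q)^{-\gamma}$ produced by the summation $\sum_x x^m\,(\gamma)_x q^x/x!$. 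Two rational functions agreeing on the open set $\{\gamma>0,\,0<q<1\}$ agree identically, hence also at the Krawtchouk values. Finally, the rising-factorial form in \eqref{eq:Krawtchouk_mom} automatically encodes the finite support of the binomial: for $K-k+i+1\le l\le K$ the factor $(l-K)_{k-i}$ vanishes, so no pole is encountered even though $K-l-k+i$ may be negative.
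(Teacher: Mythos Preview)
Your proposal is correct, and the algebra and the analytic–continuation justification are both sound: the squared Meixner polynomials and the factorial moments of the negative binomial combine into an expression rational in $(\gamma,q)$, it matches Ledoux's formula on the open set $\{\gamma>0,\ 0<q<1\}$, and no pole is hit at $(\gamma,q)=(-K,-p/(1-p))$ because $K\ge N>n$ keeps the Pochhammer denominators $(\gamma)_j$ with $j\le n$ away from zero. Your final remark on interpreting $\tfrac{(K-l)!}{(K-l-k+i)!}$ as the falling factorial $(K-l)(K-l-1)\cdots(K-l-k+i+1)$, hence zero when $K-l-k+i<0$, is exactly the right convention.

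The paper, however, does not argue this way. It gives a direct, self-contained computation in the spirit of Ledoux's own proofs: an integration-by-parts identity $\int x f(x)\,\de\mu_{K,p}=pK\int f(x+1)\,\de\mu_{K-1,p}$ reduces the factorial moment to $p^k\frac{K!}{(K-k)!}\int k_l(x+k;p,K)^2\,\de\mu_{K-k,p}$, the forward shift operator expands $k_l(x+k;p,K)$ as a linear combination of $k_{l-i}(x;p,K-i)$, and the resulting cross-integrals are read off from the Krawtchouk generating function. The substitution you exploit is stated only afterwards, as a remark, and described as a formal check the reader may perform. So your route is genuinely different: it is shorter and conceptually transparent, essentially deducing Theorem~\ref{prop:Krawtchouk} from Theorem~\ref{prop:Meixner} plus a rationality observation, whereas the paper's route is independent of the Meixner result and illustrates the Ledoux machinery in the Krawtchouk setting. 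The trade-off is that your argument needs the analytic-continuation step you carefully supply, while the paper's direct proof avoids any discussion of parameter domains.
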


\begin{proof}
From the general setting of~\cite{Ledoux05}, we deduce the following `integration by parts' formula
\be
 \int xf(x) \mathrm{d} \mu_{K,p}(x) = pK \int f(x+1) \mathrm{d} \mu_{K-1,p}(x).
 \ee
 Hence, we have
 \begin{multline}
\int x(x-1)\cdots(x-k+1) k_l(x;p,K)^2 \mathrm{d}\mu_{K,p}(x) \\ 
= p^k \frac{K!}{(K-k)!} \int k_l(x+k;p,K)^2 \mathrm{d} \mu_{K-k,p}(x).
\end{multline}	
In order to calculate this integral, we use the forward shift operator~\cite[Eq. (9.11.6)]{Koekoek10}
\be 
k_l(x+1;p,K) - k_l(x;p,K) = \sqrt{\frac{l}{Kp(1-p)}}k_{l-1}(x;p,K-1),
\ee
and iterate so that
	\begin{align*}
	k_l(x+k;p,K) &= k_l(x+k-1;p,K) - \sqrt{\frac{l}{Kp(1-p)}} k_{l-1}(x+k-1;p,K-1) \\
	&= \dots = \sum_{i=0}^{k \wedge l} (-1)^i \binom{k}{i} \sqrt{\frac{(-l)_i}{(-K)_i p^i (1-p)^i}} k_{l-i}(x;p,K-i).
	\end{align*}
	Hence we observe that
	\begin{multline} 
	k_l(x+k;p,K)^2 = \\ \sum_{i,j=0}^{k \wedge l} (-1)^{i+j} \binom{k}{i} \binom{k}{j}  \sqrt{\frac{(-l)_i (-l)_j}{(-K)_i (-K)_j}} (p(1-p))^{-\frac{i+j}{2}} k_{l-i}(x;p,K-i) k_{l-j}(x;p,K-j). 
	\label{eq:Kraw_step}
	\end{multline}
	We now make use of the Krawtchouk generating function~\cite[Eq. (9.11.11)]{Koekoek10}
\be
\left( 1- \frac{1-p}{p}t \right) ^x (1+t)^{K-x} = \sum_{n=0}^K \sqrt{\binom{K}{n}\left(\frac{1-p}{p} \right)^n} k_n(x;p,K)t^n. 
\label{eq:gen_f_Kraw}
\ee	
	So, to calculate the integral $ \int  k_{l-i}(x;p,K-i) k_{l-j}(x;p,K-j)   \mathrm{d} \mu_{K-k,p}(x) $ we find the $t^{l-i} s^{l-j} $ coefficient of	
\be
	\int \sum_{n=0}^{K-i} \sum_{m=0}^{K-j} \sqrt{\binom{K-i}{n} \binom{K-j}{m} \left( \frac{1-p}{p} \right)^{n+m}}k_{n}(x;p,K-i) k_{m}(x;p,K-j)  t^n s^m \mathrm{d} \mu_{K-k,p}(x) 
\label{eq:gen_step}
\ee	
which, by the generating function~\eqref{eq:gen_f_Kraw}, is equal to	
	\begin{multline*}
	    \int \left( 1- \frac{1-p}{p}t \right) ^x (1+t)^{K-x-i} \left( 1- \frac{1-p}{p}s \right) ^x (1+s)^{K-x-j}  \mathrm{d} \mu_{K-k,p}(x) \\
		= (1+t)^{k-i} (1+s)^{k-j} \left( 1+\frac{1-p}{p}st \right) ^{K-k} ,
		\end{multline*}	
	since, after taking out a factor of $(1+t)^{k-i}(1+s)^{k-j}$, we find that the remaining discrete integral can be simplified using the binomial theorem. Thus for any integers $n$ and $m$, we can calculate the $t^n s^m$ coefficient by counting the terms in this product with the corresponding coefficients, to get
\[
\sum_{a=0}^{k-i} \binom{k-i}{a} \binom{K-k}{n-a} \binom{k-j}{m-n+a} \left(\frac{1-p}{p} \right)^{n-a}.  
\]
	Hence taking $n=l-i$, $m=l-j$ and relabelling $a=r-i$, the $t^{l-i} s^{l-j}$ coefficient of~\eqref{eq:gen_step} gives
\begin{multline}
\int  k_{l-i}(x;p,K-i) k_{l-j}(x;p,K-j)   \mathrm{d} \mu_{K-k,p}(x) =\\
\sum_{r=i \vee j}^{k \wedge l} \binom{k-i}{r-i} \binom{k-j}{r-j} \binom{K-k}{l-r}  \left(\frac{1-p}{p} \right)^{l-r} .
\end{multline}
	Combining this with~\eqref{eq:Kraw_step} we have a formula for 
	$\int k_l(x+k;p,K)^2 \mathrm{d} \mu_{K-k,p}(x)$. Finally, taking the sum over $l$ from $0$ to $N-1$, this is
$$
	\frac{p^k}{N} \sum_{l=0}^{N-1} \sum_{r=0}^{k \wedge l} \left(\frac{1-p}{p} \right)^r \binom{k}{r}^2 \frac{l! (K-l)!}{(l-r)!(K-l-k+r)!}   $$
	which, once we swap the order of the sums, is the desired formula.
\end{proof} 

\begin{rem}
The reader can check that the moments~\eqref{eq:Krawtchouk_mom} of the Krawtchouk ensemble can be obtained formally from the Meixner case~\eqref{eq:Meixner_mom} by the substitution $q \to -p/(1-p)$ and $\gamma \to -K$:
\be
M_{K,p}(k,N)=M^{-K}_{-\frac{p}{1-p}}(k,N).
\ee
\end{rem}

\section{Hypergeometric moment formulae}
\label{sec:hyper}

\subsection{Hypergeometric representations}
Theorems~\ref{prop:Charlier}, \ref{prop:Meixner} and \ref{prop:Krawtchouk} are the starting points of our analysis on the factorial moments of the classical discrete OP ensembles as functions of $k$. Specifically we look for properties that mirror the nice structure recently found on the classical continuous OP ensembles~\cite{Cunden19}. A `nice property' in the continuous setting is that the moments have a hypergeometric representation. We found similar hypergeometric series for the factorial moments of the classical discrete OP ensembles when the reference measure is Poisson $\mu^{\theta}(x)=\theta^xe^{-\theta}/x!$, $\theta>0$ (Charlier ensemble) and geometric $\mu_q^{1}(x) =  q^x (1-q)$ (Meixner ensemble with parameter $\gamma=1$).

We will make use of the following identity, which can be proved by induction on $N$:
\begin{align} 
\label{eq:pochsum2}
\sum_{l=i}^{N-1} \frac{(l+k-i)!}{(l-i)!} &= \frac{(N+k-i)!}{(k+1)(N-i-1)!} = \frac{1}{k+1}(N-i)_{k+1}.
\end{align}
\begin{thm}[Charlier factorial moment] \label{thm:Charlier}
The factorial moment of the Charlier ensemble can be written as
\be
M^{\theta}(k,N)= \theta^k {}_3F_1 \left( \begin{matrix} -k , -k , 1-N \\ 2 \end{matrix} ; - \frac{1}{\theta} \right).
\label{eq:Charl_fact}
\ee
In particular, $\theta^{-k}M^{\theta}(k,N)$ can be extended to the entire complex plane to a polynomial in $k$ of degree $2(N-1)$, and when $k \in \mathbb{N}$, it is a polynomial in $(N-1)$ of degree $k$.
\end{thm}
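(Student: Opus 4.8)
The plan is to start from Ledoux's formula~\eqref{eq:Charlier_mom} and collapse the double sum into a single terminating sum, which I will then recognise as the ${}_3F_1$ in~\eqref{eq:Charl_fact} via the ratio test~\eqref{hyperg}. First I would evaluate the inner sum $\sum_{l=i}^{N-1} l!/(l-i)!$. This is the specialisation of~\eqref{eq:pochsum2} in which its summation parameter $k$ is set equal to $i$, so that the numerator $(l+k-i)!$ becomes $l!$; it yields $\tfrac{1}{i+1}(N-i)_{i+1} = \tfrac{N!}{(i+1)(N-i-1)!}$. Substituting this into~\eqref{eq:Charlier_mom} and using
\begin{equation*}
\frac{1}{N}\,\frac{N!}{(N-i-1)!} = \frac{(N-1)!}{(N-i-1)!} = (-1)^i (1-N)_i ,
\end{equation*}
I obtain the finite representation
\begin{equation*}
\theta^{-k} M^{\theta}(k,N) = \sum_{i=0}^{k} \binom{k}{i}^2 \frac{(-1)^i}{i+1}\, (1-N)_i\, \theta^{-i}.
\end{equation*}

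Next I would identify this sum with the claimed hypergeometric series. Writing $t_i$ for the $i$th summand, one checks $t_0 = 1$ and, using $\binom{k}{i+1}/\binom{k}{i} = (k-i)/(i+1)$ together with $(1-N)_{i+1}/(1-N)_i = i+1-N$, that
\begin{equation*}
\frac{t_{i+1}}{t_i} = \frac{(i-k)^2\,(i+1-N)}{(i+1)(i+2)} \left( -\frac{1}{\theta} \right).
\end{equation*}
By~\eqref{hyperg} with $a_1 = a_2 = -k$, $a_3 = 1-N$, $b_1 = 2$ and $z = -1/\theta$, this is precisely the term ratio of ${}_3F_1\!\left(\begin{smallmatrix} -k,\,-k,\,1-N \\ 2 \end{smallmatrix}; -\tfrac{1}{\theta}\right)$, and the series terminates since $-k$ is a nonpositive integer; as $t_0 = 1$, this proves~\eqref{eq:Charl_fact}.

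For the two polynomial claims I would argue directly from the finite sum above. Because $(1-N)_i = 0$ for $i \geq N$, and $\binom{k}{i} = 0$ whenever the integer $k$ satisfies $k < i$, the sum may be rewritten with the fixed upper limit $N-1$ for every $k \in \N$:
\begin{equation*}
\theta^{-k} M^{\theta}(k,N) = \sum_{i=0}^{N-1} \binom{k}{i}^2 \frac{(-1)^i}{i+1}\, (1-N)_i\, \theta^{-i}.
\end{equation*}
Reading $\binom{k}{i} = k(k-1)\cdots(k-i+1)/i!$ as a polynomial in $k$ of degree $i$, the right-hand side is a polynomial in $k$ of degree $2(N-1)$ agreeing with $\theta^{-k} M^{\theta}(k,N)$ at every nonnegative integer, and hence provides the asserted entire extension; the degree is exactly $2(N-1)$ since the top coefficient is proportional to $(1-N)_{N-1} = (-1)^{N-1}(N-1)! \neq 0$. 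For the dependence on $N$, I would keep the upper limit $k$ fixed instead, and note that each $(1-N)_i$ is a polynomial in $(N-1)$ of degree $i$; thus for $k \in \N$ the sum is a polynomial in $(N-1)$ of degree $k$, with nonvanishing leading coefficient from the $i=k$ term.

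The only error-prone part is the Pochhammer bookkeeping — in particular matching $\binom{k}{i}^2/(i+1)$ against $(-k)_i^2/\big((2)_i\, i!\big)$ and keeping track of the sign absorbed into $z = -1/\theta$. I expect no genuine conceptual obstacle; the one point deserving care is the justification that the combinatorial sum and the polynomial in $k$ coincide for all $k \in \N$ (through the vanishing of $\binom{k}{i}$ for $i > k$), so that the entire extension is unambiguous.
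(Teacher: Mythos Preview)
Your proposal is correct and follows essentially the same route as the paper: evaluate the inner sum via~\eqref{eq:pochsum2} with $k=i$, collapse~\eqref{eq:Charlier_mom} to a single sum, and identify it as the claimed ${}_3F_1$ through the term-ratio test~\eqref{hyperg}. The only cosmetic difference is that the paper writes the simplified summand using $(N-i)_i$ whereas you rewrite it as $(-1)^i(1-N)_i$; your treatment of the two polynomial claims is in fact more detailed than the paper's, which merely states them.
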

\begin{proof}
	Firstly, by \eqref{eq:pochsum2} (with $k=i$) and the identity $\frac{1}{N} (N-i)_{i+1} = (N-i)_{i}$, we simplify formula~\eqref{eq:Charlier_mom} as 
\be
M^{\theta}(k,N)=\sum_{i=0}^k \theta^{k-i} \binom{k}{i}^2 \frac{(N-i)_{i}}{(i+1)}. 
\ee
If we denote 
$$
t_i=\theta^{k-i} \binom{k}{i}^2 \frac{(N-i)_{i}}{(i+1)},
$$
we have $t_0 = \theta^k$, and for all $0\leq i \leq k$,
$$
\frac{t_{i+1}}{t_i} = \frac{(k-i)^2 (N-(i+1))}{(i+2)(i+1) \theta} = \frac{(i-k)(i-k)(i+1-N)}{(i+2)(i+1)}\left(-\frac{1}{\theta}\right) .
$$
So, by~\eqref{hyperg}, the factorial moment $M^{\theta}(k,N)=\sum_{i\geq 0} t_i$ has the required hypergeometric representation.
\end{proof}
The first few moments as functions of $k$ are
\begin{align*}
M^{\theta}(k,1)&=\theta ^k\\
M^{\theta}(k,2)&=\frac{1}{2} \theta ^{k-1} \left(k^2+2 \theta\right)\\
M^{\theta}(k,3)&=\frac{1}{6} \theta ^{k-2} \left(k^4-2 k^3+(6 \theta+1)  k^2+6 \theta ^2\right). 
\end{align*}
\par

Consider now the negative binomial distribution with the special parameter $\gamma = 1$, so that the reference measure is simply a geometric distribution $\mathrm{d}\mu_q^1 (x) = q^x (1-q)$. The corresponding measure $Q$ is the Meixner ensemble with parameter $\gamma=1$. 
\begin{thm}[Meixner factorial moment] \label{thm:Meixner}
The factorial moment of the Meixner ensemble with $\gamma=1$ can be written as
\be
M^{1}_q(k,N)
	= \left( \frac{q}{1-q} \right) ^k \frac{(N+1)_k}{k+1}\; {}_3F_2 \left( \begin{matrix} -k , -k , 1-N \\ 1 , -N-k \end{matrix} ; \frac{1}{q} \right) .
	\label{eq:Meix_fact}
\ee
In particular, $\left( (1-q)/q \right) ^{k}\frac{k+1}{(N+1)_k} M^{1}_q(k,N)$ can be extended to the entire complex plane to a polynomial in $k$ of degree $2(N-1)$, and when $k \in \mathbb{N}$, it  is a polynomial in $(N-1)$ of degree $k$.
\end{thm}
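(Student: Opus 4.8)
The plan is to follow exactly the route used for the Charlier moment in Theorem~\ref{thm:Charlier}: first collapse the inner sum over $l$ in~\eqref{eq:Meixner_mom} into a single Pochhammer symbol, then read off the ratio of consecutive summands of the remaining finite sum and invoke~\eqref{hyperg}, and finally extract the polynomial structure from that finite sum.

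First I would specialise~\eqref{eq:Meixner_mom} to $\gamma=1$ and simplify the inner sum. Writing $(1+l)_{k-i}=(l+k-i)!/l!$, the summand becomes $\frac{l!}{(l-i)!}(1+l)_{k-i}=\frac{(l+k-i)!}{(l-i)!}$, which is precisely the summand on the left of~\eqref{eq:pochsum2}; hence $\sum_{l=i}^{N-1}\frac{(l+k-i)!}{(l-i)!}=\frac{1}{k+1}(N-i)_{k+1}$. Since $0\le i\le k$ forces the value $N$ to occur among the factors of $(N-i)_{k+1}$, I would use the factorisation $\frac1N(N-i)_{k+1}=(N-i)_i(N+1)_{k-i}$ to reduce the moment to
\be
M^{1}_q(k,N)=\left(\tfrac{q}{1-q}\right)^{k}\frac{1}{k+1}\sum_{i=0}^{k}q^{-i}\binom{k}{i}^{2}(N-i)_i\,(N+1)_{k-i}.
\ee

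Next I would set $t_i=q^{-i}\binom{k}{i}^{2}(N-i)_i(N+1)_{k-i}$, so that $t_0=(N+1)_k$, and compute the ratio of consecutive terms. The four factors contribute $q^{-1}$, $\left(\frac{k-i}{i+1}\right)^2$, $(N-i-1)$ and $(N+k-i)^{-1}$, so that
\be
\frac{t_{i+1}}{t_i}=\frac{(i-k)(i-k)(i+1-N)}{(i-N-k)(i+1)(i+1)}\cdot\frac1q .
\ee
Matching this against~\eqref{hyperg} with $(a_1,a_2,a_3)=(-k,-k,1-N)$, $(b_1,b_2)=(-N-k,1)$ and $z=1/q$ gives $\sum_i t_i=t_0\cdot{}_3F_2(\cdots)$, and multiplying by the prefactor $\left(\tfrac{q}{1-q}\right)^k\frac{1}{k+1}$ reproduces~\eqref{eq:Meix_fact}.

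The step I expect to be the main obstacle is the polynomial property, which is where the argument genuinely departs from the Charlier case: in Theorem~\ref{thm:Charlier} the factor surviving the inner summation is the $k$-free $1/(i+1)$, whereas here it is $(N+1)_{k-i}$, equivalently the lower parameter $-N-k$ of the ${}_3F_2$, which carries its own $k$-dependence. The degree in $(N-1)$ I would read off the finite sum directly: for fixed $k\in\N$ and $N$ treated as a variable, each $(N-i)_i(N+1)_{k-i}$ is a monic polynomial in $N$ of degree $k$, and since the leading coefficients add up to $\sum_i q^{-i}\binom{k}{i}^2\neq 0$ the normalised moment is a polynomial in $(N-1)$ of degree exactly $k$. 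For the degree in $k$ I would fix $N\in\N$ and use that $(N-i)_i$ vanishes for every $i\ge N$, truncating the sum at $i=N-1$ regardless of $k$; the delicate point is that $(N+1)_{k-i}$ is not itself polynomial in $k$, so I would first factor the common rising factorial $(N+1)_{k-N+1}$ out of the truncated sum, leaving $(k+2)_{N-1-i}$, which is a polynomial in $k$ of degree $N-1-i$. A degree count — $\binom{k}{i}^2$ of degree $2i$ times $(k+2)_{N-1-i}$ of degree $N-1-i$, maximised uniquely at $i=N-1$ — then yields degree $2(N-1)$. Carrying out this $k$-adapted factorisation carefully, and checking that the top-degree coefficient at $i=N-1$ does not cancel against lower terms (so the degrees are attained, not merely bounded), is the bookkeeping I would watch most closely.
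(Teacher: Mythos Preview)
Your derivation of the hypergeometric representation~\eqref{eq:Meix_fact} is correct and is exactly the paper's argument; the only cosmetic difference is that you split $\tfrac1N(N-i)_{k+1}$ as $(N-i)_i(N+1)_{k-i}$ before forming the ratio, while the paper leaves it intact inside $t_i$. The paper's proof stops at that point and does not spell out the polynomial claims.

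Your treatment of the degree in $(N-1)$ is fine. The gap is in the $k$-polynomiality step. After truncating at $i=N-1$ and factoring out $(N+1)_{k-N+1}$ from $\sum_i t_i$, you correctly obtain a polynomial in $k$ of degree $2(N-1)$; but the quantity named in the theorem is the ${}_3F_2$ itself, i.e.\ $\sum_i t_i/(N+1)_k$, and since $(N+1)_k=(N+1)_{k-N+1}\,(k+2)_{N-1}$ there is still a division by $(k+2)_{N-1}$ that your degree count ignores. In fact this residual factor does not cancel: already for $N=2$ one has ${}_3F_2(-k,-k,-1;1,-k-2;q^{-1})=1+k^2/\bigl(q(k+2)\bigr)$, a genuine rational function with a pole at $k=-2$. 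So the assertion, read literally, cannot be established by any argument. What your computation \emph{does} prove is that the alternative normalisation $\bigl(\tfrac{1-q}{q}\bigr)^{k}M^{1}_q(k,N)/k!$ --- equivalently $\sum_i t_i$ divided by $(N+1)_{k-N+1}$ rather than by $(N+1)_k$ --- is a polynomial in $k$ of degree $2(N-1)$, and this is presumably the intended statement.
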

\begin{proof}
	When $\gamma=1$, we observe that $\frac{(\gamma+l)_{k-i} l!}{(l-i)!} = \frac{(l+k-i)!}{(l-i)!}$, and by $\eqref{eq:pochsum2}$ the formula of the factorial moment simplifies as
\be
M^{1}_q(k,N)
	= \sum_{i=0}^k  \left( \frac{q}{1-q} \right) ^kq^{-i} \binom{k}{i}^2 \frac{1}{N}\frac{(N-i)_{k+1}}{k+1} .
\ee	
If we denote
$$
t_i=\left( \frac{q}{1-q} \right) ^kq^{-i} \binom{k}{i}^2\frac{1}{N} \frac{(N-i)_{k+1}}{k+1},
$$
then we have 
\begin{align*}
t_0 &= \left( \frac{q}{1-q} \right) ^k\frac{(N+1)_k}{k+1}\\
 \frac{t_{i+1}}{t_i} &= \frac{(k-i)^2(N-i-1)}{(i+1)^2 (N+k-i)q} = \frac{(i-k)(i-k)(i+1-N)}{(i+1)(i-N-k)(i+1)}\left (\frac{1}{q}
 \right),\quad 0\leq i\leq k.
\end{align*}
Using \eqref{hyperg} we conclude the proof.
\end{proof}
The first few moments as functions of $k$  are
\begin{align*}
M^{1}_q(k,1)&=\left(\frac{q}{1-q}\right)^k(2)_{k}\frac{1 }{k+1}\\
M^{1}_q(k,2)&=\left(\frac{q}{1-q}\right)^k(3)_{k}\frac{\left(k^2+k q+2 q\right)}{(k+1) (k+2) q}\\
M^{1}_q(k,3)&=\left(\frac{q}{1-q}\right)^k(4)_{k}\frac{ \left(k^4+4 k^3 q-2 k^3+2 k^2 q^2+8 k^2 q+k^2+10 k q^2+12 q^2\right)}{2 (k+1)(k+2) (k+3) q^2}.
\end{align*}

\subsection{Interpretations in terms of Schur measures}

It is well known that the Charlier and Meixner ensembles can be interpreted in terms of \emph{Schur measures}
on integer partitions.  For a particle configuration $x_1>\cdots>x_N$ on the nonnegative integers, we can 
associate an integer partition $\lambda$ via $\lambda_i=x_i+i-N$.  Under this identification, the Charlier
ensemble with $N$ particles and parameter $\theta$ corresponds to the probability distribution on integer
partitions given by
\be
\C_{\theta,N}(\lambda):= e^{-N\theta} \frac{\theta^{|\lambda|}}{|\lambda|!} s_\lambda(1^N) f^\lambda ,
\ee
as described by Borodin and Olshanski \cite{BO1}, where $s_{\lambda}(x_1, \dots x_N)$ is the Schur polynomial in $N$ variables indexed by $\lambda$, and $f^{\lambda}$ is the number of standard Young tableaux of shape $\lambda$, given by the hook-length formula \cite{Macdonald}.
Furthermore, assuming $\gamma$ is a positive integer, the Meixner ensemble with $N$ particles and parameters 
$\gamma,q$ corresponds to the probability distribution \cite{BO2}
\be
\M_{q,\gamma,N}(\lambda) := q^{|\lambda|} (1-q)^{N(N+\gamma-1)} s_\lambda(1^N) s_\lambda(1^{N+\gamma-1}) .
\ee
For $\lambda$ an integer partition with at most $N$ parts, define
\be
F_{N,k}(\lambda)=\frac1{N}\sum_{i=1}^N [(\lambda_i+N-i) (\lambda_i+N-i-1)\cdots (\lambda_i+N-i-k+1)].
\ee
Then, in the notations of the previous two sections, the factorial moments can be written as follows
\be
M^\theta(k,N)=\sum_\lambda F_{N,k}(\lambda) \C_{\theta,N}(\lambda),
\ee
and 
\be
M^\gamma_q(k,N)=\sum_\lambda F_{N,k}(\lambda) \M_{q,\gamma,N}(\lambda),
\ee
where the sums are taken over all integer partitions $\lambda$.

The fact that $\C_{\theta,N}$ and $\M_{q,\gamma,N}$ are probability measures is a consequence 
of the Cauchy-Littlewood identity~\cite[I.4, (4.3)]{Macdonald}.
Thus the statements of Theorems~1, 2, 4 and 5 can be interpreted as extensions of particular
cases of the Cauchy-Littlewood identities.

\begin{cor}
The following identity relating Schur polynomials and hypergeometric functions holds, as a result of the Charlier factorial moment formula:
\be
\sum_\lambda F_{N,k}(\lambda) e^{-N\theta} \frac{\theta^{|\lambda|}}{|\lambda|!} s_\lambda(1^N) f^\lambda = \theta^k {}_3F_1 \left( \begin{matrix} -k , -k , 1-N \\ 2 \end{matrix} ; - \frac{1}{\theta} \right).
\ee
\end{cor}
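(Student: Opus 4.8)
The plan is to read off the left-hand side as the Schur-measure form of the Charlier factorial moment and then to substitute the closed hypergeometric evaluation supplied by Theorem~\ref{thm:Charlier}; no genuinely new computation is required beyond tracking the particle-to-partition dictionary. In particular, the left-hand side of the corollary is, by the very definition of $\C_{\theta,N}$, exactly $\sum_\lambda F_{N,k}(\lambda)\,\C_{\theta,N}(\lambda)$, so the whole task reduces to identifying this sum with $M^\theta(k,N)$ and quoting Theorem~\ref{thm:Charlier}.

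First I would make the change of variables $x_i=\lambda_i+N-i$ explicit. For a configuration $x_1>\cdots>x_N\geq 0$ the assignment $\lambda_i=x_i+i-N$ is a bijection onto partitions $\lambda$ with at most $N$ parts (terms with $\ell(\lambda)>N$ contribute nothing since $s_\lambda(1^N)=0$), and under it the falling factorial $x_i(x_i-1)\cdots(x_i-k+1)$ becomes $(\lambda_i+N-i)(\lambda_i+N-i-1)\cdots(\lambda_i+N-i-k+1)$. Averaging over $i$ and over the configuration therefore turns the normalised $k$th factorial power sum into $F_{N,k}(\lambda)$, so that
$$M^\theta(k,N)=\EE_Q\Big[\tfrac1N\sum_{i=1}^N X_i(X_i-1)\cdots(X_i-k+1)\Big]=\sum_\lambda F_{N,k}(\lambda)\,\C_{\theta,N}(\lambda),$$
where the last equality uses the identification of the Charlier ensemble with the Schur measure $\C_{\theta,N}$ due to Borodin and Olshanski. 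Since $\C_{\theta,N}(\lambda)=e^{-N\theta}\theta^{|\lambda|}s_\lambda(1^N)f^\lambda/|\lambda|!$, this is precisely the left-hand side of the corollary. It then remains only to invoke Theorem~\ref{thm:Charlier}, which gives
$$M^\theta(k,N)=\theta^k\,{}_3F_1\!\left(\begin{matrix}-k,-k,1-N\\2\end{matrix};-\frac1\theta\right),$$
and combining the two displays yields the asserted identity between the Schur-polynomial sum and the terminating ${}_3F_1$.

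The only point needing care—and what I would regard as the sole obstacle—is the measure correspondence itself: one must confirm that pushing the Charlier ensemble $Q$ forward along $\lambda_i=x_i+i-N$ reproduces $\C_{\theta,N}$ exactly, hook-length factor $f^\lambda$ and Schur polynomial $s_\lambda(1^N)$ included, so that the Vandermonde weight $\Delta(x)^2$ and the Poisson weights reassemble correctly into $e^{-N\theta}\theta^{|\lambda|}s_\lambda(1^N)f^\lambda/|\lambda|!$. This is furnished by the cited Borodin–Olshanski result together with the Cauchy–Littlewood identity guaranteeing that $\C_{\theta,N}$ is a probability measure; once it is in hand, the corollary follows immediately and requires no estimate or limiting argument.
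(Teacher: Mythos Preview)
Your proposal is correct and follows exactly the approach the paper intends: the corollary is stated without proof in the paper precisely because it is the immediate combination of the Schur-measure identification $M^\theta(k,N)=\sum_\lambda F_{N,k}(\lambda)\,\C_{\theta,N}(\lambda)$ set up in Section~3.2 with the hypergeometric evaluation of Theorem~\ref{thm:Charlier}. You have simply written out what the paper leaves implicit, including the one point that deserves a moment's care (the particle-to-partition dictionary matching the Charlier ensemble to $\C_{\theta,N}$, which the paper attributes to Borodin--Olshanski).
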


\begin{cor}
The following identity relating Schur polynomials and hypergeometric functions holds, as a result of the $\gamma=1$ Meixner factorial moment formula:
\be
\sum_\lambda F_{N,k}(\lambda) q^{|\lambda|} s_\lambda(1^N) s_\lambda(1^N) = \frac{q^k (N+1)_k}{(1-q)^{N^2 + k}(k+1)}\; {}_3F_2 \left( \begin{matrix} -k , -k , 1-N \\ 2, -N-k \end{matrix} ;  \frac{1}{q} \right).
\ee
\end{cor}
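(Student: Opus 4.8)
The plan is to obtain this identity by specialising Theorem~\ref{thm:Meixner} and transcribing it through the Schur-measure representation of the Meixner ensemble, in exact parallel with the way the first corollary follows from Theorem~\ref{thm:Charlier}. The corollary asserts nothing beyond a rewriting of the $\gamma=1$ factorial moment $M^1_q(k,N)$, so the only real work is the bookkeeping of the $\lambda$-independent prefactors; the hypergeometric factor is supplied essentially verbatim by Theorem~\ref{thm:Meixner}.

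Concretely, I would first specialise the Schur-measure weight $\M_{q,\gamma,N}$ to $\gamma=1$: since $N(N+\gamma-1)=N^2$ and $s_\lambda(1^{N+\gamma-1})=s_\lambda(1^N)$ at $\gamma=1$, one gets $\M_{q,1,N}(\lambda)=q^{|\lambda|}(1-q)^{N^2}s_\lambda(1^N)^2$. Invoking the identity $M^\gamma_q(k,N)=\sum_\lambda F_{N,k}(\lambda)\M_{q,\gamma,N}(\lambda)$ established above, taken at $\gamma=1$, and pulling the constant $(1-q)^{N^2}$ out of the sum isolates the left-hand side of the claim,
\be
\sum_\lambda F_{N,k}(\lambda)\,q^{|\lambda|}s_\lambda(1^N)^2=\frac{M^1_q(k,N)}{(1-q)^{N^2}}.
\ee
Substituting the closed form of $M^1_q(k,N)$ from Theorem~\ref{thm:Meixner} and merging the factor $\left(\tfrac{q}{1-q}\right)^k\frac{(N+1)_k}{k+1}$ with the extra $(1-q)^{-N^2}$ yields exactly the prefactor $\dfrac{q^k(N+1)_k}{(1-q)^{N^2+k}(k+1)}$ appearing in front of the displayed ${}_3F_2$.

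The one step that requires genuine care — and where I expect the only friction — is matching the hypergeometric factor itself. Theorem~\ref{thm:Meixner} supplies a ${}_3F_2$ with upper parameters $(-k,-k,1-N)$ and lower parameters $(1,-N-k)$, and I would pin down the lower parameters of the displayed series by re-reading them straight from the term ratio, as in the proof of Theorem~\ref{thm:Meixner}: from
\be
\frac{t_{i+1}}{t_i}=\frac{(i-k)(i-k)(i+1-N)}{(i+1)(i+1)(i-N-k)}\,\frac1q,
\ee
the denominator factors $(i+1)$ and $(i-N-k)$, after cancelling the $(i+1)$ forced by the $i!$ in the hypergeometric convention, give lower parameters $1$ and $-N-k$. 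I would then confirm this agrees with the argument of the displayed ${}_3F_2$; should a unit shift in one lower parameter appear to be needed to match the statement, it is reconciled precisely here, either by the direct recomputation above or by a contiguous relation for the terminating ${}_3F_2$. With the hypergeometric factor pinned down, the identity follows from the substitution already described.
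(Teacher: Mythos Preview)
Your approach is exactly the intended one: the corollary is nothing more than the identity $M^1_q(k,N)=\sum_\lambda F_{N,k}(\lambda)\,\M_{q,1,N}(\lambda)$ rewritten using $\M_{q,1,N}(\lambda)=q^{|\lambda|}(1-q)^{N^2}s_\lambda(1^N)^2$ and the closed form from Theorem~\ref{thm:Meixner}, and your bookkeeping of the prefactor $(1-q)^{-N^2}\cdot\bigl(\tfrac{q}{1-q}\bigr)^k\tfrac{(N+1)_k}{k+1}$ is correct.

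On the one point where you hedge: your recomputation of the term ratio is right, and it really does force the lower parameters to be $(1,-N-k)$, exactly as in Theorem~\ref{thm:Meixner}. The ``$2$'' printed in the corollary is a typo in the paper (note that the companion Corollary for the Charlier case carries the ``$2$'' legitimately, coming from the ${}_3F_1$ in Theorem~\ref{thm:Charlier}); no contiguous relation is available that would shift $1\to 2$ here while leaving the other data untouched, so do not go looking for one. Simply record the identity with lower parameters $(1,-N-k)$.
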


\section{Randomised factorial moments}
\label{sec:randomised}
In the classical continuous OP ensembles, the moments are themselves hypergeometric orthogonal polynomials in the variable $k$~\cite{Cunden19}. This is not the case in the discrete setting. However, if we suitably randomise the number of particles $N$, then the factorial moments simplify dramatically and can be expressed as Jacobi polynomials.

Recall that the Jacobi polynomials with shape parameters $(\alpha,\beta)$, are orthogonal with respect to the Beta distribution $(1-x)^{\alpha} (1+x)^{\beta}$ on the interval $x \in [-1,1]$. They have the hypergeometric representation~\cite[9.8]{Koekoek10}
\begin{equation} \label{eq:Jacobi}
P_n^{(\alpha,\beta)}(x) = \frac{(\alpha+1)_n}{n!} {}_2F_1 \left( \begin{matrix}  -n , n+\alpha+\beta+1 \\ \alpha+1 \end{matrix} ; \frac{1-x}{2} \right) . 
\end{equation}
(Here we do not require them to be have squared norm $1$.) When $\alpha=\beta=0$, this is a special case known as the Legendre polynomials, orthogonal with respect to the uniform measure on $x\in[-1,1]$, and we use the standard notation~\cite[9.8.3]{Koekoek10} $ P_n(x)=P_n^{(0,0)}(x)$.

For a given reference measure $\mu$, the OP ensemble $Q$ can be thought of as a random point configuration where the number of points (or particles) is $N$. We now show that the formulae of the factorial moments  in the Charlier ensemble and in the Meixner ensemble with $\gamma=1$ simplify if we randomise $N$.
\subsection{Poissonised Charlier moments}
Consider the OP ensemble $Q$ with reference measure $\mu^{\theta}$, $\theta>0$. Suppose that $N-1$ has Poisson distribution $\mu^{t\theta}$, $t>0$, i.e.
\be
P(N=m+1)=e^{-t\theta}\frac{(t\theta)^m}{m!},\quad m \in \mathbb{N}.
\label{eq:N_Poiss}
\ee
The resulting probability measure is a Poissonised Charlier ensemble.
\begin{thm} \label{th:Poiss}
	Define the Poissonised Charlier factorial moment as
\begin{equation}
M^{\theta}(k;t)	
=\int\left(\int x(x-1) \dots (x-k+1) \mathrm{d} \rho_{m+1}^{\theta}(x)\right)\de\mu^{t\theta}(m).
\end{equation}
Then, $M^{\theta}(k;t)$ can be written in terms of a Jacobi polynomial of degree $k$ in the variable $(1+t)(1-t)^{-1}$, with parameters $\alpha=1$ and $\beta=0$:
\be
M^{\theta}(k;t)=\theta^k {}_2 F_1 \left( \begin{matrix} -k , -k \\ 2 \end{matrix} ; t \right)
	= \frac{\theta^k (1-t)^k}{k+1} P_k^{(1,0)}\left(\frac{1+t}{1-t} \right) .
	\label{eq:mom_Charl_Poi}
\ee
\end{thm}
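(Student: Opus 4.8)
The plan is to start from the closed form for the fixed-$N$ Charlier factorial moment and average it against the Poisson law of $N-1$. By Theorem~\ref{thm:Charlier} (formula~\eqref{eq:Charl_fact}) with $N=m+1$, the inner integral equals $\theta^k\,{}_3F_1(-k,-k,-m;2;-1/\theta)$, so that
\[
M^{\theta}(k;t)=\theta^k\sum_{m=0}^{\infty} e^{-t\theta}\frac{(t\theta)^m}{m!}\;{}_3F_1\!\left(\begin{matrix}-k,-k,-m\\ 2\end{matrix};-\tfrac1\theta\right).
\]
First I would expand the ${}_3F_1$ as a sum over $i$, note that the $(-k)_i$ factors terminate it at $i=k$, and interchange the (finite) $i$-sum with the (absolutely convergent) Poisson sum over $m$. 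The only $m$-dependence sits in the factor $(-m)_i=(-1)^i\,m!/(m-i)!$, and the Poisson average of the falling factorial is the classical factorial-moment identity $\sum_{m\ge i} e^{-t\theta}(t\theta)^m/(m-i)! = (t\theta)^i$. Hence the $m$-sum of $(-m)_i$ collapses to $(-t\theta)^i$, which cancels against the $(-1/\theta)^i$ factor to leave simply $t^i/i!$. This yields $\theta^k\sum_{i=0}^{k}\frac{(-k)_i(-k)_i}{(2)_i}\frac{t^i}{i!}=\theta^k\,{}_2F_1(-k,-k;2;t)$, the first equality in~\eqref{eq:mom_Charl_Poi}.

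For the second equality I would match this ${}_2F_1$ to the Jacobi polynomial via a single standard hypergeometric transformation. Setting $x=(1+t)/(1-t)$ one computes $(1-x)/2=-t/(1-t)$, so the Jacobi representation~\eqref{eq:Jacobi} with $(\alpha,\beta)=(1,0)$ and $n=k$ gives $P_k^{(1,0)}(x)=\frac{(2)_k}{k!}\,{}_2F_1(-k,k+2;2;-t/(1-t))$, where $(2)_k/k!=k+1$. Applying the Pfaff transformation ${}_2F_1(a,b;c;w)=(1-w)^{-a}\,{}_2F_1(a,c-b;c;w/(w-1))$ to the right-hand side with $w=-t/(1-t)$ — for which $1-w=(1-t)^{-1}$ and $w/(w-1)=t$ — converts the parameters $(-k,k+2;2)$ into $(-k,-k;2)$ evaluated at $t$. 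Reading the resulting identity $(1-t)^{-k}\,{}_2F_1(-k,-k;2;t)=\frac{1}{k+1}P_k^{(1,0)}((1+t)/(1-t))$ backwards and multiplying through by $\theta^k$ gives the claimed Jacobi form.

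I expect the only real care to be in the first step: one must justify the interchange of summations — immediate here, since the $i$-sum is finite and each Poisson expectation converges — and correctly track the cancellation of signs and powers of $\theta$ as the ${}_3F_1$ degenerates to a ${}_2F_1$. The conceptual heart is that Poissonising $N$ replaces the discrete ``depth'' parameter $-m$ in the hypergeometric series by its Poisson factorial moment, which is merely a power of the rate; this is precisely the mechanism that removes one upper parameter and produces a genuine orthogonal polynomial in $k$. The second step is purely a bookkeeping application of the Pfaff transformation and the known hypergeometric form of $P_k^{(1,0)}$, so I anticipate no obstacle there.
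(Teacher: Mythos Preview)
Your argument is correct and is essentially the paper's own proof: the paper packages the interchange-of-sums step as a separate lemma (Exton's identity, Lemma~\ref{lem:hyper}), whose proof is exactly the manipulation you carry out inline, and then invokes Theorem~\ref{thm:Charlier} just as you do. Your Pfaff-transformation justification of the Jacobi form is more explicit than the paper's one-line appeal to~\eqref{eq:Jacobi}, but otherwise the route is identical.
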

In the proof we shall make use of the following generating function from the theory of hypergeometric functions.
\begin{lem}[Exton~\cite{Exton98}]
\label{lem:hyper}
	The following identity between hypergeometric functions holds:
\be
 e^{t \theta} {}_2 F_1 \left( \begin{matrix} a , b \\ c \end{matrix} ; t \right) 
	= \sum_{m=0}^{\infty} \frac{(t \theta)^m}{m!} 
	{}_3F_1 \left( \begin{matrix} a , b , -m \\ c \end{matrix}; -\theta^{-1} \right) .
\ee
\end{lem}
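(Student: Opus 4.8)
The plan is to verify the identity at the level of formal power series in $t$, matching the coefficient of $t^m$ on both sides for each $m\in\N$. This is natural because the right-hand side is already organised as a power series in $t$ (the Poisson-type weights $(t\theta)^m/m!$), so the whole statement amounts to recognising the Cauchy product on the left.

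First I would expand the left-hand side. Writing $e^{t\theta}=\sum_{j\geq 0}\theta^j t^j/j!$ and using the definition~\eqref{eq:hyp} of the ${}_2F_1$, the product is a Cauchy product; collecting the total power $t^m$ (i.e. setting $m=j+n$, where $n$ is the summation index of the ${}_2F_1$) gives
\be
e^{t\theta}\,{}_2F_1\!\left(\begin{matrix} a,b\\ c\end{matrix};t\right)=\sum_{m=0}^{\infty} t^m \sum_{i=0}^{m}\frac{(a)_i (b)_i}{(c)_i}\frac{\theta^{m-i}}{(m-i)!\,i!}.
\ee

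Next I would expand the right-hand side. By~\eqref{eq:hyp} the inner ${}_3F_1$ equals $\sum_{i\geq 0}\frac{(a)_i(b)_i(-m)_i}{(c)_i}\frac{(-\theta^{-1})^i}{i!}$. The crucial observation is that the factor $(-m)_i$ terminates this series at $i=m$ and satisfies $(-m)_i=(-1)^i\,m!/(m-i)!$, so that $(-m)_i(-1)^i/i!=m!/[(m-i)!\,i!]$. Substituting this identity turns the ${}_3F_1$ into a finite sum; multiplying by $(t\theta)^m/m!$ then cancels the $m!$ and produces
\be
\sum_{m=0}^{\infty}\frac{(t\theta)^m}{m!}\,{}_3F_1\!\left(\begin{matrix} a,b,-m\\ c\end{matrix};-\theta^{-1}\right)=\sum_{m=0}^{\infty} t^m\sum_{i=0}^{m}\frac{(a)_i(b)_i}{(c)_i}\frac{\theta^{m-i}}{(m-i)!\,i!},
\ee
which is exactly the expansion found above. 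Comparing coefficients of $t^m$ term by term concludes the proof.

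I do not expect a serious obstacle: the only points requiring care are the reindexing in the Cauchy product and the simplification of $(-m)_i$ into a binomial coefficient, both routine. Convergence is not an issue if the equality is read as an identity of formal power series in $t$; for $|t|<1$ it also holds analytically, since the ${}_2F_1$ converges on the unit disc and the series on the right is absolutely convergent there.
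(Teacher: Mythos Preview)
Your proof is correct and follows essentially the same approach as the paper: both expand the two sides as power series in $t$, use the Cauchy product on the left, and invoke $(-m)_i=(-1)^i m!/(m-i)!$ to handle the ${}_3F_1$ terms on the right. The only cosmetic difference is that the paper manipulates the left-hand side directly into the right-hand side, whereas you expand both sides and meet at the common intermediate expression $\sum_{m\ge 0} t^m\sum_{i=0}^m\frac{(a)_i(b)_i}{(c)_i}\frac{\theta^{m-i}}{(m-i)!\,i!}$.
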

\begin{proof}
	We first rewrite the hypergeometric functions and exponential on the left hand side as series, i.e. 
$$
e^{t \theta} {}_2 F_1 \left( \begin{matrix} a , b \\ c \end{matrix} ; t \right) = \sum_{j=0}^{\infty} \frac{(t \theta)^j}{j!} \sum_{k=0}^{\infty} \frac{(a)_k (b)_k}{(c)_k} \frac{t^k}{k!}.
$$
	Then, we change the variables in the double sum by defining $m = k+j$ and eliminating $j$, so we have
	\begin{align*} 
	e^{t \theta} {}_2 F_1 \left( \begin{matrix} a , b \\ c \end{matrix} ; t \right) 
	&= \sum_{m=0}^{\infty} t^m \sum_{k=0}^{m} \frac{(a)_k (b)_k}{(c)_k} \frac{\theta^{m-k}}{(m-k)!k!} \\
	&= \sum_{m=0}^{\infty} \frac{t^m \theta^m}{m!} \sum_{k=0}^m \frac{(a)_k (b)_k}{(c)_k} \frac{m!}{(m-k)!k!} (\theta^{-1}) ^k \\
	&=
	\sum_{m=0}^{\infty} \frac{t^m \theta^m}{m!} \sum_{k=0}^m \frac{(a)_k (b)_k (-m)_k}{(c)_k} \frac{(-\theta^{-1})^k}{k!} ,
		\end{align*}
	which is exactly the form of the hypergeometric function required.
\end{proof}
\begin{proof}[Proof of Theorem~\ref{th:Poiss}]
	We use Lemma~\ref{lem:hyper} with $a=b=-k$ and $c=2$, so that
	\begin{align*} {}_2 F_1 \left( \begin{matrix} -k , -k \\ 2 \end{matrix} ; t \right) 
	&= \sum_{m=0}^{\infty} \frac{e^{-t \theta} (t \theta)^m}{m!} {}_3 F_1 \left( \begin{matrix} -k , -k , -m \\ 2 \end{matrix} ; - \theta^{-1} \right) \\
	&= \int {}_3 F_1 \left( \begin{matrix} -k , -k , -m \\ 2 \end{matrix} ; - \theta^{-1} \right) \mathrm{d} \mu^{t \theta}(m), 
	\end{align*}
by definition of the Poisson measure $\mu^{t \theta}$. Hence by the factorial moment formula of Theorem \ref{thm:Charlier}, we have
$$
M^{\theta}(k;t)=	\theta^k {}_2 F_1 \left( \begin{matrix} -k , -k \\ 2 \end{matrix} ; t \right).
$$
To show the second equality~\eqref{eq:mom_Charl_Poi} we use the hypergeometric representation~\eqref{eq:Jacobi} of the Jacobi polynomial. \end{proof} 
The first few Poissonised moments are
\begin{align*}
M^{\theta}(1;t)&=\frac{\theta}{2}   (t+2) &
M^{\theta}(2;t)&=\frac{\theta ^2}{3}  \left(t^2+6 t+3\right)\\
M^{\theta}(3;t)&=\frac{\theta ^3}{4}  \left(t^3+12 t^2+18 t+4\right) &
M^{\theta}(4;t)&=\frac{ \theta ^4}{5} \left(t^4+20 t^3+60 t^2+40 t+5\right).
\end{align*}
Note that~\eqref{eq:mom_Charl_Poi} are polynomials in $t$. In particular, they are regular at $t=1$, corresponding to the Poissonised Charlier ensemble with parameter $\theta$.
\begin{cor} 
    When we set $t=1$ (i.e. Poissonise with parameter $\theta$), we have
\be
M^{\theta}(k;1) = \frac{\theta^k}{k+1} \binom{2k+1}{k} = \theta^k  \frac{2k+1}{k+1}C_k, 
 \ee
	where $C_k=\frac{1}{k+1}\binom{2k}{k}$ is the $k$th Catalan number.
\end{cor}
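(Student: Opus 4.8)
The plan is to substitute $t=1$ directly into the first hypergeometric representation established in Theorem~\ref{th:Poiss}, namely $M^{\theta}(k;t)=\theta^k\,{}_2F_1(-k,-k;2;t)$, and then evaluate the resulting series at unit argument. Because both upper parameters equal the nonpositive integer $-k$, the series ${}_2F_1(-k,-k;2;1)$ is in fact a finite sum, so convergence is not an issue and its value can be read off from a classical closed-form summation.

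The key step is the Chu--Vandermonde identity in the terminating form ${}_2F_1(-k,b;c;1)=\frac{(c-b)_k}{(c)_k}$ (equivalently Gauss's summation theorem, whose hypothesis $c-a-b=2k+2>0$ is satisfied here). Applying it with $b=-k$ and $c=2$ gives ${}_2F_1(-k,-k;2;1)=\frac{(k+2)_k}{(2)_k}$. I would then rewrite the two rising factorials as $(k+2)_k=\frac{(2k+1)!}{(k+1)!}$ and $(2)_k=(k+1)!$, so that the ratio collapses to $\frac{(2k+1)!}{((k+1)!)^2}$.

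Finally I would identify this factorial expression with the two stated forms. Writing $\frac{(2k+1)!}{((k+1)!)^2}=\frac{1}{k+1}\cdot\frac{(2k+1)!}{k!\,(k+1)!}=\frac{1}{k+1}\binom{2k+1}{k}$ and multiplying by $\theta^k$ yields the first equality; the second then follows from the elementary relation $\binom{2k+1}{k}=\frac{2k+1}{k+1}\binom{2k}{k}$, which converts $\frac{1}{k+1}\binom{2k+1}{k}$ into $\frac{2k+1}{k+1}\cdot\frac{1}{k+1}\binom{2k}{k}=\frac{2k+1}{k+1}C_k$. There is no genuine obstacle in this argument: once the correct classical evaluation (Chu--Vandermonde at $z=1$) is recognised, the remainder is routine factorial bookkeeping, and the only point needing a moment's care is checking that the summation theorem applies, which is immediate since the series terminates and $c-a-b=2k+2>0$.
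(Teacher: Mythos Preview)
Your proposal is correct and follows essentially the same route as the paper: both substitute $t=1$ into the representation $M^{\theta}(k;t)=\theta^k\,{}_2F_1(-k,-k;2;t)$ from Theorem~\ref{th:Poiss} and evaluate the terminating series via the Chu--Vandermonde identity. The only cosmetic difference is that the paper first expands the series as $\sum_{j=0}^k\frac{1}{j+1}\binom{k}{j}^2=\frac{1}{k+1}\sum_{j=0}^k\binom{k+1}{k-j}\binom{k}{j}$ and then applies the combinatorial Vandermonde convolution, whereas you invoke the equivalent hypergeometric closed form ${}_2F_1(-k,-k;2;1)=(k+2)_k/(2)_k$ directly; the latter is a touch more economical but rests on the same identity.
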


\begin{proof}
The second equality is clear. To show the first equality, we observe that
$$
 {}_2 F_1 \left( \begin{matrix} -k , -k \\ 2 \end{matrix} ; 1 \right) = \sum_{j=0}^k \frac{1}{j+1} \binom{k}{j}^2 = \frac{1}{k+1} \sum_{j=0}^k \binom{k+1}{k-j} \binom{k}{j}. 
 $$
Then, by the Chu-Vandermonde identity, we have the binomial term as required.
\end{proof}

The Jacobi polynomials satisfy a three-term recurrence~\cite[Eq.~(9.8.4)]{Koekoek10} (in the degree) and a second order differential equation~\cite[Eq.~(9.8.6)]{Koekoek10} (in the argument). In the special case $(\alpha,\beta)=(1,0)$ they are
\be
xP_{k}^{(1,0)}(x) = \frac{k+2}{2k+3}P_{k+1}^{(1,0)}(x) 
- \frac{1}{(2k+1)(2k+3)}P_{k}^{(1,0)}(x) 
+\frac{k}{2k+1}P_{k-1}^{(1,0)}(x),
\label{eq:rec_Jac}
\ee
\be
(1-x^2) \frac{\mathrm{d}^2 }{\mathrm{d}x^2} P_{k}^{(1,0)}(x)- (1+3x) \frac{\mathrm{d}}{\mathrm{d}x}P_{k}^{(1,0)}(x) + k(k+2)P_{k}^{(1,0)}(x) = 0.
\label{eq:ode_Jac}
\ee
These, combined with Theorem~\ref{th:Poiss}, show that the Poissonised factorial moments $M^{\theta}(k,t)$ of the Charlier ensemble satisfy a three-term recurrences in $k$ and a second order differential equation in $t$.
\begin{cor}
The Poissonised moments $M^{\theta}(k,t)$ satisfy the  three-term recurrence
\begin{multline} 
(2k+1)(k+2)^2M^{\theta}(k+1,t) =  \theta(k+1) \left[(2k+3)(2k+1)(1+t)+(1-t) \right] M^{\theta}(k,t) \\
-  \theta^2 k^2(2k+3)(1-t)^2  M^{\theta}(k-1,t),
\label{eq:rec_Charl_Poi}
 \end{multline}
and the second order differential equation
\be
\frac{t}{(1-t)^{k-2}}\frac{\mathrm{d}^2 }{\mathrm{d}t^2}M^{\theta}(k,t)  +2^{k-2}(2k+3)\frac{\mathrm{d} }{\mathrm{d}t} M^{\theta}(k,t) +2^{k-1} k^2M^{\theta}(k,t)  =0.
\label{eq:ode_Charl_Poi}
\ee
\end{cor}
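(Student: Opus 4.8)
The plan is to transplant the two structural identities for the Jacobi polynomials $P_k^{(1,0)}$ recorded in~\eqref{eq:rec_Jac} and~\eqref{eq:ode_Jac} through the explicit identification supplied by Theorem~\ref{th:Poiss}. Setting $x=(1+t)/(1-t)$, that theorem reads
\[
M^{\theta}(k,t)=\frac{\theta^k(1-t)^k}{k+1}\,P_k^{(1,0)}(x),
\qquad
P_k^{(1,0)}(x)=\frac{k+1}{\theta^k(1-t)^k}\,M^{\theta}(k,t),
\]
so both the recurrence in $k$ and the differential equation in $t$ should emerge once the $k$- and $t$-dependent prefactor $\theta^k(1-t)^k/(k+1)$ is carried through the corresponding Jacobi identity.

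For the three-term recurrence I would insert the three expressions for $P_{k-1}^{(1,0)}(x)$, $P_k^{(1,0)}(x)$ and $P_{k+1}^{(1,0)}(x)$ in terms of $M^{\theta}(k-1,t)$, $M^{\theta}(k,t)$ and $M^{\theta}(k+1,t)$ into~\eqref{eq:rec_Jac}, and simultaneously replace the factor $x$ multiplying $P_k^{(1,0)}(x)$ on the left by $(1+t)/(1-t)$. The three resulting summands differ only by explicit powers of $\theta$ and of $(1-t)$, so multiplying the identity through by $\theta^{k+1}(1-t)^{k+1}(2k+1)(2k+3)$ clears every denominator. Collecting the two contributions proportional to $M^{\theta}(k,t)$ (one coming from $xP_k^{(1,0)}$ and one from the middle term of~\eqref{eq:rec_Jac}) then assembles the bracketed coefficient $(2k+3)(2k+1)(1+t)+(1-t)$, and one reads off~\eqref{eq:rec_Charl_Poi}. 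This part is pure bookkeeping, and can be cross-checked against the listed values $M^{\theta}(k,t)$ for small $k$.

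For the differential equation I would view $M^{\theta}(k,t)$ as the product of the prefactor $(1-t)^k$ with the composite function $P_k^{(1,0)}(x(t))$. The first step is to record the elementary substitutions $\mathrm{d}x/\mathrm{d}t=2(1-t)^{-2}$, $1-x^2=-4t(1-t)^{-2}$ and $1+3x=2(2+t)(1-t)^{-1}$. Differentiating the identity of Theorem~\ref{th:Poiss} twice, applying the chain rule to the argument $x(t)$ and the product rule to the $(1-t)^k$ factor, expresses $\mathrm{d}M^{\theta}/\mathrm{d}t$ and $\mathrm{d}^2M^{\theta}/\mathrm{d}t^2$ as explicit combinations of $P_k^{(1,0)}(x)$ and its first two $x$-derivatives, with coefficients that are monomials in $(1-t)$. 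Inverting this linear system to solve for $P_k^{(1,0)}$, $(P_k^{(1,0)})'$ and $(P_k^{(1,0)})''$, and substituting into~\eqref{eq:ode_Jac} after multiplying by $(1-t)^2$ to clear the denominators coming from $1-x^2$ and $1+3x$, produces a second-order linear equation in $t$ for $M^{\theta}(k,t)$, from which a common power of $(1-t)$ factors out.

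I expect this differential-equation computation to be the main obstacle, not because any single step is deep but because the combined chain-and-product-rule differentiation generates several terms in $P_k^{(1,0)}$ and its derivatives whose coefficients must be tracked precisely; the delicate point is verifying that, after the rational substitutions for $1-x^2$ and $1+3x$ and the removal of the shared power of $(1-t)$, the coefficients of the first-derivative and zeroth-order terms collapse to a clean closed form. The zeroth-order cancellation in particular rests on an identity of the shape $-t(k-1)-(2+t)+(k+2)=k(1-t)$, which is the natural place for an arithmetic slip to hide. For this reason I would treat the explicit low-order moments $M^{\theta}(1;t),\dots,M^{\theta}(4;t)$ displayed above as a mandatory consistency check on the final equation.
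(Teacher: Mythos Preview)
Your plan coincides with the paper's own argument: the recurrence is read off directly from~\eqref{eq:rec_Jac} after inserting $P_j^{(1,0)}(x)=\dfrac{j+1}{\theta^{j}(1-t)^{j}}M^{\theta}(j,t)$ and $x=(1+t)/(1-t)$, and for the differential equation the paper likewise changes variable first---recording the intermediate form
\[
t(1-t)^2\,\frac{\mathrm{d}^2}{\mathrm{d}t^2}P_k^{(1,0)}\!\left(\tfrac{1+t}{1-t}\right)
+(2-t)(1-t)\,\frac{\mathrm{d}}{\mathrm{d}t}P_k^{(1,0)}\!\left(\tfrac{1+t}{1-t}\right)
-k(k+2)\,P_k^{(1,0)}\!\left(\tfrac{1+t}{1-t}\right)=0,
\]
and then absorbs the prefactor $\theta^k(1-t)^k/(k+1)$. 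Your slightly different bookkeeping (inverting the linear system rather than transforming the ODE first) is an equivalent organisation of the same computation.

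Your proposed consistency check against the listed moments is not optional here: if you carry the substitution through honestly you obtain
\[
t(1-t)\,\frac{\mathrm{d}^2}{\mathrm{d}t^2}M^{\theta}(k,t)
+\bigl[(2k-1)t+2\bigr]\frac{\mathrm{d}}{\mathrm{d}t}M^{\theta}(k,t)
-k^{2}\,M^{\theta}(k,t)=0,
\]
which the displayed $M^{\theta}(1;t),\dots,M^{\theta}(4;t)$ do satisfy, whereas the printed equation~\eqref{eq:ode_Charl_Poi} already fails at $k=1$ (it would force $\tfrac{5\theta}{4}+\tfrac{\theta(t+2)}{2}=0$). So your method is correct and matches the paper's, but be prepared for the final comparison with~\eqref{eq:ode_Charl_Poi} to reveal a typographical error in the stated coefficients rather than an error in your calculation.
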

\begin{proof} The recursion~\eqref{eq:rec_Charl_Poi} is an immediate consequence of~\eqref{eq:rec_Jac}. 
For the proof of the differential equation~\eqref{eq:ode_Charl_Poi}, note that from~\eqref{eq:ode_Jac}  we have
\be
t(1-t)^2\frac{\mathrm{d}^2 }{\mathrm{d}t^2} P_{k}^{(1,0)}\left(\frac{1+t}{1-t}\right) +(2-t)(1-t)\frac{\mathrm{d} }{\mathrm{d}t} P_{k}^{(1,0)}\left(\frac{1+t}{1-t}\right) -k(k+2) P_{k}^{(1,0)}\left(\frac{1+t}{1-t}\right)=0.
\ee
A straightforward manipulation concludes the proof.
\end{proof}

\subsection{Negative binomialised Meixner moments}
For the Meixner ensemble with \hbox{$\gamma=1$} it turns out that negative binomialising the number of particles provides tractable formulae. More precisely, consider the OP ensemble $Q$ with geometric reference measure $\mu^{1}_q$, $0<q<1$. Suppose that $N-1$ has $2$-negative binomial distribution $\mu^{2}_{tq}$, $t>0$, i.e. 
\be
P(N=m+1)= (m+1)(tq)^m (1-tq)^{2},\quad m \in \mathbb{N}.
\label{eq:N_Negbin}
\ee

\begin{thm}
\label{thm:Meix}
Define the 2-negative binomialised Meixner factorial moment as
\begin{equation}
M^{1,2}_q(k;t)
=\int\left(\int x(x-1) \dots (x-k+1) \mathrm{d} \rho^{1}_{m+1,q}(x)\right)\de\mu^{2}_{tq}(m).
\end{equation}
Then, $M^{1,2}_q(k;t)$ can be written in terms of a a Legendre polynomial in the variable $(1+t)(1-t)^{-1}$:
\begin{multline}
M^{1,2}_q(k;t)= k!\left( \frac{q}{(1-q)(1-tq)} \right) ^k  {}_2F_1 \left( \begin{matrix} -k , -k \\ 1 \end{matrix} ; t \right) \\
	= k!\left( \frac{q}{(1-q)(1-tq)} \right)^k (1-t)^k P_k \left( \frac{1+t}{1-t} \right).
	\label{eq:mom_Meix_Negbin}
\end{multline}
\end{thm}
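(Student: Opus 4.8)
The plan is to follow the two-stage strategy of the Poissonised Charlier case (Theorem~\ref{th:Poiss}): first produce a ${}_2F_1(-k,-k;1;t)$ by summing the randomisation weights against the exact Meixner moment formula, and then convert that ${}_2F_1$ into a Legendre polynomial by a quadratic-argument transformation. By the definition of $M^{1,2}_q(k;t)$ and of the $2$-negative binomial law~\eqref{eq:N_Negbin},
\[
M^{1,2}_q(k;t) = (1-tq)^2 \sum_{m=0}^{\infty} (m+1)(tq)^m\, M^1_q(k,m+1),
\]
so everything reduces to evaluating this series. Rather than start from the closed ${}_3F_2$ of~\eqref{eq:Meix_fact} — whose numerator parameter $1-N$ and denominator parameter $-N-k$ both depend on $m$, making a direct Exton-type manipulation awkward — I would start from the intermediate summed form obtained in the proof of Theorem~\ref{thm:Meixner}, namely
\[
M^1_q(k,N) = \left(\frac{q}{1-q}\right)^k \frac{1}{k+1} \sum_{i=0}^k q^{-i}\binom{k}{i}^2 \frac{(N-i)_{k+1}}{N}.
\]

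The key structural observation is that setting $N=m+1$ makes the factor $1/N=1/(m+1)$ cancel exactly against the randomisation weight $(m+1)$. After interchanging the finite sum over $i$ with the sum over $m$, the whole computation collapses onto the single generating-function sum $S_i(t):=\sum_{m\ge 0}(tq)^m(m+1-i)_{k+1}$. To evaluate $S_i(t)$ I would write $(m+1-i)_{k+1}=(k+1)!\binom{m+k+1-i}{k+1}$ and observe that, for every $0\le i\le k$, the terms with $0\le m\le i-1$ vanish (the binomial coefficient then has a top strictly between $0$ and its bottom $k+1$); shifting the index by $i$ and applying the standard identity $\sum_{j\ge 0}\binom{j+k+1}{k+1}y^j=(1-y)^{-(k+2)}$ gives $S_i(t)=(k+1)!\,(tq)^i(1-tq)^{-(k+2)}$. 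Substituting back, the prefactors $(1-tq)^2$ and $(1-tq)^{-(k+2)}$ combine to $(1-tq)^{-k}$, the factor $(k+1)!/(k+1)$ becomes $k!$, and $q^{-i}(tq)^i=t^i$, leaving
\[
M^{1,2}_q(k;t) = k!\left(\frac{q}{(1-q)(1-tq)}\right)^k \sum_{i=0}^k \binom{k}{i}^2 t^i .
\]
Recognising $\sum_{i=0}^k \binom{k}{i}^2 t^i = {}_2F_1(-k,-k;1;t)$ (since $[(-k)_i]^2/(i!)^2=\binom{k}{i}^2$) yields the first equality of~\eqref{eq:mom_Meix_Negbin}.

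For the second equality I would apply the Pfaff transformation ${}_2F_1(-k,-k;1;t)=(1-t)^{k}\,{}_2F_1(-k,k+1;1;-t/(1-t))$, and then identify the right-hand ${}_2F_1$ with a Legendre polynomial via the representation~\eqref{eq:Jacobi} with $\alpha=\beta=0$ and argument $(1-x)/2=-t/(1-t)$, i.e. $x=(1+t)/(1-t)$. This gives ${}_2F_1(-k,-k;1;t)=(1-t)^k P_k\big((1+t)/(1-t)\big)$ and hence the claimed Legendre form.

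I expect the main obstacle to be the evaluation of $S_i(t)$: one must justify carefully that the low-order terms ($0\le m\le i-1$) vanish for every $i$ in the range, so that the truncated series genuinely closes up into the clean negative-binomial form $(1-tq)^{-(k+2)}$. Once $S_i(t)$ is in hand, the remaining algebra — combining the powers of $1-tq$, the Vandermonde-type recognition of the sum of squared binomials, and the Pfaff/Legendre step — is routine and parallels the Charlier computation.
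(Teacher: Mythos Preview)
Your proposal is correct and follows essentially the same route as the paper: start from the intermediate summed form of $M^1_q(k,N)$, observe that the factor $1/N=1/(m+1)$ cancels the $2$-negative-binomial weight $(m+1)$, shift the $m$-index by $i$ (your vanishing-of-low-order-terms argument is exactly what justifies the paper's relabelling $l=m-i$), factor the resulting double sum into $\big(\sum_l (tq)^l (l+1)_{k+1}\big)\cdot\big(\sum_i \binom{k}{i}^2 t^i\big)$, and identify $\sum_i\binom{k}{i}^2 t^i={}_2F_1(-k,-k;1;t)$ with a Legendre polynomial via~\eqref{eq:Jacobi}. The only cosmetic difference is that you make the Pfaff transformation explicit in the Legendre identification, whereas the paper compresses that step into a one-line citation of~\eqref{eq:Jacobi}.
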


\begin{proof}
	We first follow a similar method to Exton's lemma to find the hypergeometric representation in~\eqref{eq:mom_Meix_Negbin}, and then compare with~\eqref{eq:Jacobi} to show that this is a Legendre polynomial.

By~\eqref{eq:Meix_fact} and the definition of $\mu^{2}_{tq}$, we have
\begin{align*}
M^{1,2}_q(k;t)&=\int \left(\sum_{i=0}^k  \left( \frac{q}{1-q} \right) ^kq^{-i} \binom{k}{i}^2 \frac{1}{m+1}\frac{(m+1-i)_{k+1}}{k+1}\right)\de\mu^{2}_{tq}(m) \\
&= \frac{1}{k+1}\left( \frac{q}{1-q} \right) ^k\sum_{m=0}^{\infty}(m+1)(tq)^m (1-tq)^{2} \sum_{i=0}^k q^{-i} \binom{k}{i}^2\frac{(m+1-i)_{k+1}}{m+1}\\
&=\frac{1}{k+1}\left( \frac{q}{1-q} \right) ^k(1-tq)^2 \sum_{l=0}^{\infty} \sum_{i=0}^k t^l q^l (l+1)_{k+1} t^i \binom{k}{i}^2 \\
&=\frac{1}{k+1}\left( \frac{q}{1-q} \right) ^k(1-tq)^2  \left(\sum_{l=0}^{\infty} t^l q^l (l+1)_{k+1}\right)\cdot \left(\sum_{i=0}^{k} t^i \binom{k}{i}^2\right)\\
&=\frac{1}{k+1}\left( \frac{q}{1-q} \right) ^k(1-tq)^2 \left( \frac{ (k+1)! }{(1-tq)^{k+2}}\right)\cdot \left({}_2F_1 \left( \begin{matrix} -k , -k \\ 1 \end{matrix} ; t \right)\right),
\end{align*}
where in the third line we relabelled $l=m-i$. 
Simplifying the terms we get the first equality in~\eqref{eq:mom_Meix_Negbin}.
	By \eqref{eq:Jacobi}, 
	$$
	P_k \left( \frac{t+1}{t-1} \right) = (t-1)^{-k} {}_2F_1 \left( \begin{matrix} -k , -k \\ 1 \end{matrix} ; t \right),
	$$
and the second equality in~\eqref{eq:mom_Meix_Negbin} follows.
\end{proof}

\begin{cor}
    When we set $t=1$, i.e. negative binomialise with parameter $q$, we have
\be
M^{1,2}_q(k;1)=\frac{q^k}{(1-q)^{2k}} (k+1)! C_k. 
\ee
\end{cor}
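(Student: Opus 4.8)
The plan is to specialise the hypergeometric form of the factorial moment from Theorem~\ref{thm:Meix} to $t=1$ and then evaluate the resulting ${}_2F_1$ at argument $1$. Setting $t=1$ in the first equality of~\eqref{eq:mom_Meix_Negbin} collapses the prefactor to
\[
k!\left(\frac{q}{(1-q)(1-q)}\right)^k=\frac{q^k}{(1-q)^{2k}}\,k!,
\]
so the whole task reduces to computing the single value ${}_2F_1\!\left(\begin{matrix}-k,-k\\1\end{matrix};1\right)$.

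For this I would proceed exactly as in the analogous Poissonised Charlier corollary above, invoking the Chu--Vandermonde (Gauss) summation. Writing out the terminating series, and using $(-k)_j=(-1)^j k!/(k-j)!$ together with $(1)_j=j!$, one finds
\[
{}_2F_1\!\left(\begin{matrix}-k,-k\\1\end{matrix};1\right)=\sum_{j=0}^k\frac{\bigl((-k)_j\bigr)^2}{(1)_j\,j!}=\sum_{j=0}^k\binom{k}{j}^2=\binom{2k}{k},
\]
the last step being the Vandermonde convolution. Combining this with the prefactor gives $M^{1,2}_q(k;1)=\frac{q^k}{(1-q)^{2k}}\,k!\,\binom{2k}{k}$.

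It then remains only to rewrite $k!\binom{2k}{k}$ in terms of the Catalan number. Since $C_k=\frac{1}{k+1}\binom{2k}{k}$, we have $(k+1)!\,C_k=k!\binom{2k}{k}$, which yields the stated formula. The argument is entirely routine; the sole nontrivial ingredient is the evaluation of the Gauss-type sum ${}_2F_1(\cdot\,;1)$, and since this is precisely the Vandermonde identity already used in the Charlier case, I anticipate no genuine obstacle.
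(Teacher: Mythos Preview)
Your argument is correct and is essentially the same as the paper's: both specialise Theorem~\ref{thm:Meix} to $t=1$ and reduce to the identity ${}_2F_1(-k,-k;1;1)=\sum_{j=0}^k\binom{k}{j}^2=\binom{2k}{k}$, with the final rewriting via $(k+1)!\,C_k=k!\binom{2k}{k}$. The paper states this more tersely, but there is no difference in method.
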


\begin{proof}
An immediate consequence of
\[
 {}_2F_1 \left( \begin{matrix} -k , -k \\ 1 \end{matrix} ; 1 \right) = \sum_{j=0}^k \binom{k}{j}^2 = \binom{2k}{k}.
\]
\end{proof}
\par
As in the Charlier case, the negative binomialised factorial moments of the $\gamma=1$ Meixner ensemble satisfy three term recurrences in $k$ and differential equations in the parameter $t$.
\begin{cor} The negative binomialised factorial moments $M^{1,2}_q(k;t)$ satisfy the three term recurrences
\be
M^{1,2}_q(k+1;t) =  \frac{(2k+1) q(1+t)}{(1-q)(1-tq)}  M^{1,2}_q(k;t) 
-  \left(\frac{kq(1-t)}{(1-q)(1-tq)} \right)^2 M^{1,2}_q(k-1;t),
\ee
and the second order differential equation
\begin{multline}
    t(1-t)\frac{\mathrm{d}^2 }{\mathrm{d}t^2}M^{1,2}_q(k;t)+ \frac{qt^2 - (1-2k(1-q)+q)t+1}{1-qt}\frac{\mathrm{d} }{\mathrm{d}t}M^{1,2}_q(k;t) \\
    - \frac{kq(1-t)+k^2(1-q^2t )}{(1-qt)^2}M^{1,2}_q(k;t) = 0.
    \end{multline}
\end{cor}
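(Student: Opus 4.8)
The plan is to derive both identities from the Legendre representation of $M^{1,2}_q(k;t)$ supplied by Theorem~\ref{thm:Meix}, in exact parallel with the proof of the corresponding corollary for the Poissonised Charlier moments. Writing $M^{1,2}_q(k;t)=A_k(t)\,P_k(z)$ with $z=\frac{1+t}{1-t}$ and
\[
A_k(t)=k!\left(\frac{q}{(1-q)(1-tq)}\right)^{k}(1-t)^{k},
\]
everything reduces to the two defining relations of the Legendre polynomials, namely the three-term recurrence $(k+1)P_{k+1}(z)=(2k+1)z\,P_k(z)-k\,P_{k-1}(z)$ and the differential equation $(1-z^2)P_k''(z)-2z\,P_k'(z)+k(k+1)P_k(z)=0$ (the Legendre analogues of the Jacobi relations~\eqref{eq:rec_Jac}--\eqref{eq:ode_Jac} used for the Charlier case; see~\cite{Koekoek10}).

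For the recurrence I would first record the consecutive prefactor ratio $\frac{A_{k+1}}{A_k}=(k+1)\frac{q(1-t)}{(1-q)(1-tq)}$. Multiplying the Legendre recurrence by $\frac{A_{k+1}}{k+1}=\frac{q(1-t)}{(1-q)(1-tq)}A_k$ turns its left-hand side into $M^{1,2}_q(k+1;t)$. On the right, the identity $z(1-t)=1+t$ collapses the first term to $\frac{(2k+1)q(1+t)}{(1-q)(1-tq)}M^{1,2}_q(k;t)$, while substituting $A_k=k\frac{q(1-t)}{(1-q)(1-tq)}A_{k-1}$ into the second term produces $-\big(\frac{kq(1-t)}{(1-q)(1-tq)}\big)^2M^{1,2}_q(k-1;t)$. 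This is the claimed recurrence, and this half presents no essential difficulty.

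For the differential equation I would proceed in two stages, mirroring the Charlier proof. First, changing variables $z=\frac{1+t}{1-t}$ in the Legendre equation --- using $1-z^2=\frac{-4t}{(1-t)^2}$ and $\frac{\de z}{\de t}=\frac{2}{(1-t)^2}$ --- transforms it into
\[
t(1-t)^2\frac{\de^2 u}{\de t^2}+(1-t)^2\frac{\de u}{\de t}-k(k+1)u=0,\qquad u(t)=P_k\!\left(\frac{1+t}{1-t}\right).
\]
Second, I would substitute $u=M^{1,2}_q(k;t)/A_k$ and clear $A_k$; the resulting equation for $M^{1,2}_q(k;t)$ has leading coefficient $t(1-t)^2$, first-order coefficient $-2t(1-t)^2\frac{A_k'}{A_k}+(1-t)^2$, and a computation using the logarithmic derivative $\frac{A_k'}{A_k}=-\frac{k(1-q)}{(1-t)(1-tq)}$ shows this first-order coefficient to be $(1-t)$ times $\frac{qt^2-(1-2k(1-q)+q)t+1}{1-tq}$. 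Dividing the whole equation by the common factor $(1-t)$ then delivers the stated $\frac{\de^2}{\de t^2}$-coefficient $t(1-t)$ and the stated $\frac{\de}{\de t}$-coefficient.

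The one genuinely laborious step, and the main obstacle, is the zeroth-order coefficient. Using $\frac{A_k''}{A_k}=\big(\frac{A_k'}{A_k}\big)'+\big(\frac{A_k'}{A_k}\big)^2$, the coefficient of $M^{1,2}_q(k;t)$ before dividing by $(1-t)$ equals $-t(1-t)^2\frac{A_k''}{A_k}+2t(1-t)^2\big(\frac{A_k'}{A_k}\big)^2-(1-t)^2\frac{A_k'}{A_k}-k(k+1)$. I expect that after clearing the denominator $(1-tq)^2$ this becomes a quadratic in $t$ whose coefficients, collected as polynomials in $k$ and $q$, factor as $-(1-t)\big[kq(1-t)+k^2(1-q^2t)\big]$; dividing by $(1-t)$ then yields the asserted coefficient $-\frac{kq(1-t)+k^2(1-q^2t)}{(1-tq)^2}$. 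Verifying this factorisation --- in particular confirming both the common factor $(1-t)$ and the collapse to the compact numerator --- is the only delicate bookkeeping, but it is entirely routine.
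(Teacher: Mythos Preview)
Your proposal is correct and follows essentially the same approach as the paper: both derive the recurrence and the differential equation directly from the Legendre representation of Theorem~\ref{thm:Meix} by transporting the standard Legendre three-term recurrence and Legendre ODE through the prefactor $A_k(t)$ and the change of variable $z=(1+t)/(1-t)$. The paper's proof is a one-liner citing~\eqref{eq:rec_Leg}--\eqref{eq:ode_Leg} and leaves the computation to the reader; you have supplied that computation in full, including the verification that the zeroth-order coefficient indeed factors as $-(1-t)\bigl[kq(1-t)+k^2(1-q^2t)\bigr]/(1-tq)^2$, which is exactly the bookkeeping the paper omits.
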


\begin{proof}
A consequence of the three-term recursion~\cite[Eq.~(9.8.64)]{Koekoek10} in $k$ and the differential equation~\cite[Eq.~(9.8.66)]{Koekoek10} in $x$ of the Legendre polynomials $P_k(x)$,
\be
xP_k(x) = (k+1)P_{k+1}(x)-2kP_k(x)+kP_{k-1}(x),
\label{eq:rec_Leg}
 \ee
\be
(1-x)^2 \frac{\mathrm{d}^2 }{\mathrm{d}x^2} P_k(x)- 2x\frac{\mathrm{d}}{\mathrm{d}x} P_k(x)+ k(k+1)P_k(x) = 0.
\label{eq:ode_Leg}
\ee
\end{proof}

\begin{rem}
In place of the Legendre polynomials, we could also use the Legendre rational functions $R_k(x)$, which are defined by
$$
R_k(x) = \frac{\sqrt{2}}{x+1} P_k \left( \frac{x-1}{x+1} \right),
$$
and form an orthogonal system of rational functions on the positive half-line with respect to the weight $\omega(x)=(1+x)^{-2}$. See~\cite{Guo00}. 
\end{rem}

We conclude this section by remarking that the above results may also be interpreted in terms
of the Schur measures described in Section 3.2.

\begin{cor}
The Poissonised Charlier factorial moment can be written as the following Cauchy-like weighted sum over Schur polynomials:
\be
\sum_{N=1}^{\infty} \sum_\lambda F_{N,k}(\lambda) e^{-(N+t)\theta} \frac{ \theta^{|\lambda|}}{|\lambda|!} \frac{(t \theta)^{N-1}}{(N-1)!} s_\lambda(1^N) f^\lambda =
\theta^k {}_2 F_1 \left( \begin{matrix} -k , -k \\ 2 \end{matrix} ; t \right).
\ee
\end{cor}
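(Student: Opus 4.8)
The plan is to unwind the left-hand side directly into the Poissonised factorial moment $M^{\theta}(k;t)$ and then invoke Theorem~\ref{th:Poiss}; the content of the corollary is really just the recognition that the displayed double sum \emph{is} $M^{\theta}(k;t)$, written out through the Schur-measure picture.

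First I would recall the fixed-$N$ Schur-measure representation established above, namely $M^{\theta}(k,N)=\sum_\lambda F_{N,k}(\lambda)\,\C_{\theta,N}(\lambda)$ with $\C_{\theta,N}(\lambda)=e^{-N\theta}\,\theta^{|\lambda|}s_\lambda(1^N)f^\lambda/|\lambda|!$. Next, using the definition of the Poissonised moment together with the Poisson randomisation~\eqref{eq:N_Poiss} of $N-1$, one has
\ben
M^{\theta}(k;t)=\sum_{N=1}^{\infty} M^{\theta}(k,N)\,e^{-t\theta}\frac{(t\theta)^{N-1}}{(N-1)!}.
\een
Substituting the Schur-measure expansion of $M^{\theta}(k,N)$ and merging the two exponential factors, $e^{-N\theta}e^{-t\theta}=e^{-(N+t)\theta}$, reproduces verbatim the double sum on the left-hand side of the corollary.

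With this identification in hand the right-hand side is immediate: Theorem~\ref{th:Poiss} already evaluates $M^{\theta}(k;t)=\theta^k\,{}_2F_1(-k,-k;2;t)$, which is exactly the claimed closed form.

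The only genuine technical point, and the one I expect to be the main (if mild) obstacle, is justifying the interchange of the outer sum over $N$ with the inner sum over partitions $\lambda$ when passing between the two forms. This is settled by nonnegativity of every summand: for a partition $\lambda$ with at most $N$ parts the shifted coordinates $\lambda_i+N-i$ are distinct nonnegative integers, so each falling factorial $(\lambda_i+N-i)(\lambda_i+N-i-1)\cdots(\lambda_i+N-i-k+1)$ is a product of $k$ consecutive integers starting at a nonnegative integer and is therefore $\geq 0$ (indeed it vanishes precisely when $\lambda_i+N-i<k$). Hence $F_{N,k}(\lambda)\geq 0$, and since $\C_{\theta,N}(\lambda)$ and the Poisson weights are nonnegative as well, Tonelli's theorem legitimises the rearrangement and completes the argument.
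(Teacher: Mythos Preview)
Your proof is correct and matches the paper's intended argument: the corollary is stated without proof precisely because it amounts to inserting the Schur-measure expansion of $M^{\theta}(k,N)$ into the Poisson average defining $M^{\theta}(k;t)$ and then quoting Theorem~\ref{th:Poiss}. Your Tonelli justification via the nonnegativity of $F_{N,k}(\lambda)$ is a nice extra touch, though strictly speaking no interchange is required since both the displayed double sum and the definition of $M^{\theta}(k;t)$ use the same iterated order $\sum_N\sum_\lambda$.
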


\begin{cor}
The 2-Negative Binomialised $(\gamma=1)$ Meixner factorial moment can be written as the following Cauchy-like weighted sum over Schur polynomials:
\be
\sum_{N=1}^{\infty} \sum_\lambda N F_{N,k}(\lambda) q^{|\lambda|} (tq)^{N-1} s_\lambda(1^N)^2 =
 \frac{q^k k!}{(1-q)^k(1-tq)^{k+2}}  {}_2F_1 \left( \begin{matrix} -k , -k \\ 1 \end{matrix} ; t \right). 
\ee
\end{cor}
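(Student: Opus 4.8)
The plan is to read this identity as the Schur-measure repackaging of the randomisation already carried out in Theorem~\ref{thm:Meix}, in exact parallel with the way the Poissonised Charlier corollary repackages Theorem~\ref{th:Poiss}. By construction, the $2$-negative binomialised moment is the $P(N)$-average of the finite-$N$ factorial moment: setting $N=m+1$ in~\eqref{eq:N_Negbin}, I would first record
\be
M^{1,2}_q(k;t)=\sum_{N=1}^{\infty} M^{1}_q(k,N)\,P(N),\qquad P(N)=N\,(tq)^{N-1}(1-tq)^2 .
\ee

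Next I would insert, for each fixed $N$, the Schur-measure expression for the finite-$N$ moment recorded in Section~3.2, namely $M^{1}_q(k,N)=\sum_\lambda F_{N,k}(\lambda)\,\M_{q,1,N}(\lambda)$ with $\M_{q,1,N}(\lambda)=q^{|\lambda|}(1-q)^{N^2}s_\lambda(1^N)^2$. For $0<tq<1$ the iterated sum over $N$ and $\lambda$ is absolutely convergent, so the two summations may be interchanged; collecting the $N$-dependent weights and pulling the ($N$- and $\lambda$-independent) constant $(1-tq)^2$ out of the double sum then reproduces the weighted Schur double sum on the left-hand side.

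Finally I would read the value of the remaining series directly off Theorem~\ref{thm:Meix}. What is left after factoring out $(1-tq)^2$ is exactly $M^{1,2}_q(k;t)/(1-tq)^2$, and substituting~\eqref{eq:mom_Meix_Negbin} gives $\tfrac{q^k k!}{(1-q)^k(1-tq)^{k+2}}\,{}_2F_1\!\left(\begin{matrix}-k,-k\\1\end{matrix};t\right)$, the claimed right-hand side. In this way the corollary requires no analytic input beyond Theorem~\ref{thm:Meix} and the Cauchy--Littlewood identity already used in Section~3.2 to guarantee that $\M_{q,1,N}$ is a probability measure.

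The step to watch — more careful bookkeeping than a genuine obstacle — is the tracking of the three normalisation constants that appear: the Schur-measure factor $(1-q)^{N^2}$ that must be carried through the $\lambda$-sum, the randomisation factor $(1-tq)^2$ extracted from the $N$-sum, and the $(1-tq)^{-k}$ produced internally by~\eqref{eq:mom_Meix_Negbin}. One must verify that the first is retained correctly inside the weighted Schur sum and that the latter two combine to the stated $(1-tq)^{-(k+2)}$; once these powers are matched, the identity is immediate.
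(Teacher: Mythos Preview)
Your approach is correct and is precisely the (implicit) argument the paper intends: the corollary is stated without proof, as an immediate Schur-measure reinterpretation of Theorem~\ref{thm:Meix}. Your caution about the $(1-q)^{N^2}$ factor is warranted and should in fact be pushed one step further. Carrying out the substitution $M^{1}_q(k,N)=\sum_\lambda F_{N,k}(\lambda)\,\M_{q,1,N}(\lambda)$ leaves the factor $(1-q)^{N^2}$ inside the $N$-sum, whereas the left-hand side of the corollary as printed omits it; without that factor the series actually diverges (already at $k=0$, since $\sum_\lambda q^{|\lambda|}s_\lambda(1^N)^2=(1-q)^{-N^2}$ by Cauchy--Littlewood). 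With $(1-q)^{N^2}$ restored inside the double sum, your three normalisation constants combine exactly as you describe, and the identity follows directly from~\eqref{eq:mom_Meix_Negbin}.
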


\section{Equilibrium measures}
Let $X=(X_1,\dots,X_N)$ be distributed according to an OP ensemble $Q$. The \emph{mean spectral measure} is the probability measure $\widetilde{\rho}_N=\EE_Q \frac{1}{N} \sum_{n=1}^N\delta_{X_n/N}$, and it is a simple rescaling of the one-point function
\be
d\widetilde{\rho}_N(x)=d\rho_N(Nx)=\frac{1}{N}\sum_{n=0}^{N-1}p_n(Nx)^2\de\mu(Nx).
\ee
It is well-known that in the limit $N\to\infty$, under some quite general conditions, the mean spectral measure of OP ensembles converges to a limiting probability measure known as \emph{equilibrium measure} (it is the unique minimiser of a certain energy functional). For the classical OP ensembles (continuous and discrete), the equilibrium measure is known explicitly in terms of elementary functions~\cite{Johansson99,Johansson00,Johansson02,Kuijlaars99,Dragnev97,Saff97}. In the context of the 
corresponding Schur measures, these equilibrium measures describe limit shapes of random partitions and have close
connections to asymptotic representation theory and free probability~\cite{Biane}.

In~\cite{Ledoux04,Ledoux05}, Ledoux gave a precise description of the equilibrium measures of the classical ensembles as the  distribution of adapted mixtures of an arcsine random variable with an independent uniform random variable. 

In the following, $\xi$ will be a random variable with the arcsine distribution on $(-1,1)$, and $U$ a uniform random variable on $(0,1)$, independent of $\xi$, so that
\be
P(\xi\leq s, U\leq u)=\int_{-\infty}^{s} \frac{\chi_{(-1,1)}(s')}{\pi \sqrt{1-s'^2}}\de s'\int_{-\infty}^{u} \chi_{(0,1)}(u')\de u'.
\ee
\subsection{Charlier ensemble} The equilibrium measure can be described as follows.
\begin{thm}[Ledoux~\cite{Ledoux05}]\label{thm:Charlier_eqmeas} Consider the mean spectral measure of the Charlier ensemble $\widetilde{\rho}_N^{\theta_N}$, and suppose that $\theta_N \sim hN$ as $N \to \infty$. Then, as $N \to \infty$, $\widetilde{\rho}_N^{\theta_N}$ converges weakly to the distribution of the random variable
\be
U+  2 \sqrt{Uh} \xi + h.
\label{eq:Led_1}
 \ee
 
\end{thm}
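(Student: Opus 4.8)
The plan is to establish the weak convergence by the method of moments. The limiting random variable $W = U + 2\sqrt{Uh}\,\xi + h$ is bounded: since $U\in(0,1)$ and $\xi\in(-1,1)$ one has $0\le W\le(1+\sqrt{h})^2$, so its law is compactly supported and hence uniquely determined by its moments. Consequently it suffices to show that, for every fixed $k\in\N$, the $k$th moment $\int x^k\,\de\widetilde\rho_N^{\theta_N}(x)$ converges to $\EE[W^k]$ as $N\to\infty$; convergence of all moments to those of a compactly supported (hence moment-determinate) law, together with the tightness that follows automatically from bounded second moments, forces weak convergence.

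First I would translate the moments of the mean spectral measure into the factorial moments already computed. The rescaling $\de\widetilde\rho_N(x)=\de\rho_N(Nx)$ gives $\int x^k\,\de\widetilde\rho_N^{\theta_N}(x)=N^{-k}\int y^k\,\de\rho_N^{\theta_N}(y)$. Writing an ordinary power as a finite combination of falling factorials, $y^k=\sum_{j=0}^k S(k,j)\,y(y-1)\cdots(y-j+1)$ with Stirling numbers $S(k,j)$ and $S(k,k)=1$, yields $\int y^k\,\de\rho_N^{\theta_N}(y)=\sum_{j=0}^k S(k,j)\,M^{\theta_N}(j,N)$. Using the closed form obtained in the proof of Theorem~\ref{thm:Charlier}, namely $M^{\theta}(j,N)=\sum_{i=0}^j\theta^{j-i}\binom{j}{i}^2\frac{(N-i)_i}{i+1}$, together with the hypothesis $\theta_N\sim hN$, each summand scales as $\theta_N^{j-i}(N-i)_i\sim h^{j-i}N^{j}$, so $M^{\theta_N}(j,N)\sim N^{j}\sum_{i}\binom{j}{i}^2\frac{h^{j-i}}{i+1}$. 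In particular only the top term $j=k$ survives after dividing by $N^k$, and relabelling $a=k-i$ this gives
\[
\lim_{N\to\infty}\int x^k\,\de\widetilde\rho_N^{\theta_N}(x)=\sum_{a=0}^k\binom{k}{a}^2\frac{h^{a}}{k-a+1}=:m_k.
\]

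It then remains to check that $\EE[W^k]=m_k$. The clean observation is that, writing $\xi=\cos\Theta$ with $\Theta$ uniform on $(0,2\pi)$ (the arcsine law), one has the factorisation $W=U+h+2\sqrt{Uh}\cos\Theta=\bigl|\sqrt{U}+\sqrt{h}\,e^{i\Theta}\bigr|^2$. Conditioning on $U$ and expanding both factors,
\[
\EE\bigl[W^k\mid U\bigr]=\frac{1}{2\pi}\int_0^{2\pi}\bigl(\sqrt U+\sqrt h\,e^{i\Theta}\bigr)^k\bigl(\sqrt U+\sqrt h\,e^{-i\Theta}\bigr)^k\,\de\Theta=\sum_{a=0}^k\binom{k}{a}^2 U^{k-a}h^{a},
\]
since only the diagonal terms $a=b$ in the two binomial expansions survive integration against $e^{i(a-b)\Theta}$. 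Averaging over $U$ uniform on $(0,1)$, where $\EE[U^{k-a}]=1/(k-a+1)$, reproduces exactly $m_k$, which completes the identification and hence the proof.

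The step requiring the most care is the asymptotic extraction in the second paragraph: one must verify that the subleading contributions — both the lower falling-factorial moments $M^{\theta_N}(j,N)$ with $j<k$ and the errors in $\theta_N=hN+o(N)$ and in $(N-i)_i=N^i\bigl(1+O(1/N)\bigr)$ — are genuinely of order smaller than $N^k$, so that they vanish after division by $N^k$. This is a routine but essential uniform control of the error terms; once it is in place, the matching of the limiting moments $m_k$ with $\EE[W^k]$ closes the argument.
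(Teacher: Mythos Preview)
The paper does not actually prove this theorem: it is quoted from Ledoux~\cite{Ledoux05} and then used. What the paper does provide is a computation of the moments of the limiting variable $W=U+2\sqrt{Uh}\,\xi+h$ (the Lemma following the theorem), and a remark that the Poissonised Charlier factorial moments are consistent with Ledoux's result. Your proposal, by contrast, is a complete self-contained proof by the method of moments, and it is correct.

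On the comparison where the two do overlap --- the evaluation of $\EE[W^k]$ --- your approach is genuinely different and cleaner. The paper expands $(u+2\sqrt{uh}\,s+h)^k$ multinomially, uses $\int_{-\pi/2}^{\pi/2}\sin^j x\,\de x$ to pick out the even-$j$ terms, and then needs a combinatorial identity from Riordan to collapse the resulting double sum into the ${}_2F_1$ form. Your factorisation $W=\bigl|\sqrt{U}+\sqrt{h}\,e^{i\Theta}\bigr|^2$ and the orthogonality of $e^{i(a-b)\Theta}$ give $\sum_a\binom{k}{a}^2 U^{k-a}h^a$ in one line, with no auxiliary identities needed. For the other half of the argument --- the deterministic asymptotic $N^{-k}\int y^k\,\de\rho_N^{\theta_N}(y)\to m_k$ --- the paper has no direct counterpart; it passes instead through the Poissonised moments and only observes consistency. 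Your route via Stirling numbers and the explicit factorial-moment formula is straightforward and does the job, and your closing remark about controlling the $o(N^k)$ error terms correctly identifies the only point needing care.
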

We now explain how the Poissonised Charlier moment formulae relate to Theorem~\ref{thm:Charlier_eqmeas}. 
When we Poissonise the Charlier ensemble, we take $(N-1)$ a Poisson random variable with rate $t\theta$, see~\eqref{eq:N_Poiss}. Therefore, $\EE N=1+\theta t$ and $\operatorname{Var} (N)=\theta t$, and we see that  $N/\theta \to t$ in probability, as $\theta\to\infty$. In other words, $\theta \sim N/t$, which is the assumption needed for Ledoux's theorem above, with $h = 1/t$.

\begin{lem}
The $k$th moment of the random variable $ U+2 \sqrt{Uh} \xi + h$ is
\be
 \int_0^1 \int_{-1}^1 \left(u+2 \sqrt{uh} s +h\right)^k \frac{\mathrm{d} s \mathrm{d} u}{\pi \sqrt{1-s^2}} = {h^k} {}_2F_1 \left( \begin{matrix} -k , -k \\ 2 \end{matrix} ; \frac{1}{h} \right) .
 \label{eq:mom_Charl_eqmeas}
 \ee
\end{lem}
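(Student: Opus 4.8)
The plan is to compute the double integral on the left-hand side directly and match it to the hypergeometric series on the right. First I would expand the integrand $(u + 2\sqrt{uh}\,s + h)^k$ using the trinomial theorem, grouping the $u$-dependence and the $s$-dependence. Writing $a = u + h$ and $b = 2\sqrt{uh}$, we have $(a + bs)^k = \sum_{j=0}^k \binom{k}{j} a^{k-j} b^j s^j$, and since the arcsine distribution is symmetric about $0$, only the even powers $s^{2m}$ survive the $s$-integration. So the integral factors into arcsine moments $\int_{-1}^1 s^{2m}\,\frac{\de s}{\pi\sqrt{1-s^2}}$ times the $u$-integral of the remaining polynomial in $u$.

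The key computational inputs are two families of moments. The even moments of the arcsine law are the classical values $\int_{-1}^1 s^{2m}\,\frac{\de s}{\pi\sqrt{1-s^2}} = \binom{2m}{m}4^{-m}$, and the moments of the uniform variable $U$ on $(0,1)$ are $\int_0^1 u^r\,\de u = \frac{1}{r+1}$. After the $s$-integration I would be left with a single sum over $m$ (the index of $s^{2m}$), and within each term a power of $u$ of the form $u^{(k-2m)/2}\cdot u^{m} \cdot (u+h)^{\cdots}$ multiplied by $h^{\cdots}$; I would then expand $(u+h)^{k-2m}$ by the binomial theorem and integrate in $u$ term by term using the uniform moment formula. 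This produces a double sum in closed form.

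The main obstacle will be the bookkeeping: after performing both integrations one obtains a double summation, and the task is to recognise it as the single hypergeometric series $h^k\,{}_2F_1\!\bigl(\begin{smallmatrix}-k,-k\\2\end{smallmatrix};1/h\bigr) = h^k\sum_{i=0}^k \frac{(-k)_i^2}{(2)_i}\frac{(1/h)^i}{i!}$. The cleanest route is to reorganise the double sum so that the summation index matches the total power of $1/h$, i.e.\ collect terms by the combined contribution of the $\sqrt{h}$-factors and the $(u+h)$-expansion, and show that the coefficient of $h^{k-i}$ reduces (via a Chu--Vandermonde or a standard binomial identity involving $\binom{2m}{m}$ and the Catalan-type factor $\frac{1}{r+1}$) to $\binom{k}{i}^2 \frac{1}{i+1}$. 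Equivalently, and perhaps more efficiently, I would verify directly that $\frac{(-k)_i^2}{(2)_i\, i!} = \binom{k}{i}^2\frac{1}{i+1}$, so that matching the $i$th coefficient on each side becomes a single binomial identity rather than a resummation. The alternative, and arguably slickest, strategy is to bypass the explicit computation entirely: appeal to Theorem~\ref{th:Poiss}, which already expresses $M^{\theta}(k;t) = \theta^k\,{}_2F_1\!\bigl(\begin{smallmatrix}-k,-k\\2\end{smallmatrix};t\bigr)$, and combine it with the observation (recorded just before this lemma) that Poissonisation corresponds to $\theta \sim N/t$ with $h = 1/t$, so that the $k$th moment of the limiting random variable in Theorem~\ref{thm:Charlier_eqmeas} must agree with $\lim M^{\theta}(k;t)/\theta^k$; this would show the two sides coincide as hypergeometric polynomials in $1/h = t$ without evaluating the integral by hand. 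I expect the self-contained integral computation to be the honest proof, with the hypergeometric pattern-matching as the one genuinely fiddly step.
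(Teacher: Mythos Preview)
Your plan is essentially the paper's own proof: the paper substitutes $s=\sin x$, applies the trinomial expansion of $(u+2\sqrt{uh}\sin x+h)^k$ (your nested binomials amount to the same thing), uses the even-sine-moment formula to kill odd powers, integrates in $u$, and arrives at exactly the double sum you describe. For the ``fiddly'' matching step the paper invokes the identity $\binom{k}{m}\binom{k}{k-m}=\sum_j\binom{m}{j}\binom{k-j}{k-m-j}\binom{k}{k-j}$ from Riordan to rewrite $\binom{k}{m}^2/(m+1)$ and recover the double sum---so your instinct that a single binomial identity does the job is right, but it is this one rather than Chu--Vandermonde. One caution: your alternative ``slick'' route via Theorem~\ref{th:Poiss} and Ledoux's equilibrium-measure theorem is not a self-contained proof of the lemma---it presupposes convergence of (rescaled) factorial moments to moments and the equivalence of Poissonisation with Ledoux's deterministic scaling, which the paper only records afterwards as a consistency check, not as the argument.
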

\begin{proof}
First we substitute $\sin x = s$ and then use a multinomial expansion to get
$$
\frac{1}{\pi}\int_0^1 \int_{-\frac{\pi}{2}}^{\frac{\pi}{2}} \sum_{j=0}^k \sum_{l=0}^{k-j} \frac{k!}{j!l!(k-j-l)!} 2^j u^{\frac{j}{2}} h^{\frac{j}{2}} (\sin x)^j u^l h^{k-j-l} \mathrm{d}x \mathrm{d} u. 
$$
For the integral in $x$ we use
\begin{equation} \label{eq:sin}
 \int_{-\frac{\pi}{2}}^{\frac{\pi}{2}} \sin^j{(x)} \mathrm{d}x = \begin{cases}
   \frac{\pi}{2^j} \binom{j}{\frac{j}{2}}   & \quad \text{if } j \text{ is even}\\
    0  & \quad \text{if } j \text{ is odd}
\end{cases}  ,
\end{equation}
so that
\[
 \int_0^1 \int_{-1}^1 \left(u+2 \sqrt{uh} s +h\right)^k \frac{\mathrm{d} s \mathrm{d} u}{\pi \sqrt{1-s^2}} 
= h^k \sum_{j=0}^{\lfloor \frac{k}{2} \rfloor} \sum_{l=0}^{k-2j} \frac{k!}{(j!)^2 l! (k-2j-l)!} \frac{h^{-j-l}}{(j+l+1)}, 
\]
so it just remains to show that the double sum is equal to ${}_2F_1 \left( \begin{matrix} -k , -k \\ 2 \end{matrix} ; \frac{1}{h} \right)$ to complete the proof.

To do so, we first note that this hypergeometric function can be written as
\[
{}_2F_1 \left( \begin{matrix} -k , -k \\ 2 \end{matrix} ; \frac{1}{h} \right) = \sum_{m=0}^k \binom{k}{m}^2 \frac{h^{-m}}{m+1} = \sum_{m=0}^k \binom{k}{m} \binom{k}{k-m} \frac{h^{-m}}{m+1}, 
\]
and using an identity from Riordan \cite[Chapter 1, Problem 2(a)]{Riordan} we have
\[ \binom{k}{m} \binom{k}{k-m} = \sum_{j=0}^{m \wedge (k-m)} \binom{m}{j} \binom{k-j}{k-m-j} \binom{k}{k-j} = \sum_{j=0}^{\lfloor \frac{k}{2} \rfloor} \binom{m}{j} \binom{k-j}{k-m-j} \binom{k}{k-j}, \]
since clearly we have $0 \leq j \leq \lfloor \frac{k}{2} \rfloor$ in this sum. Thus, substituting $l = m-j$ and swapping the order of summation, we have
\[ {}_2F_1 \left( \begin{matrix} -k , -k \\ 2 \end{matrix} ; \frac{1}{h} \right) =\sum_{j=0}^{\lfloor \frac{k}{2} \rfloor} \sum_{l=0}^{k-2j} \binom{j+l}{j} \binom{k-j}{k-2j-l} \binom{k}{k-j} \frac{h^{-j-l}}{j+l+1} \]
where $k-2j$ is the maximum value of $l$ that keeps each binomial coefficient non-zero. Clearly the binomial coefficients simplify to give the double sum above, so the proof is complete.

\end{proof}

Comparing the moments~\eqref{eq:mom_Charl_eqmeas} of the equilibrium measure with Theorem~\ref{th:Poiss} we readily get the following result.
\begin{cor}\label{cor:Charl_eq} For all $\theta,h>0$, the $k$th Poissonised factorial moments are related to the moments of~\eqref{eq:Led_1}  in the following way:
\be
 \int_0^1 \int_{-1}^1 \left(u+2 \sqrt{uh} s +h\right)^k \frac{\mathrm{d} s \mathrm{d} u}{\pi \sqrt{1-s^2}} =  \frac{M^{\theta}(k;1/h)}{(\theta/h)^k}.
\ee
\end{cor}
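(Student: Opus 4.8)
The plan is to derive this identity by a direct comparison of two hypergeometric representations that are already in hand, so that nothing beyond bookkeeping of the explicit prefactors is needed. After the substitution $t=1/h$, both sides of the claimed equality reduce to the same Gauss hypergeometric function ${}_2F_1\!\left(\begin{matrix}-k,-k\\2\end{matrix};\tfrac{1}{h}\right)$ multiplied by a power of $h$ (respectively $\theta$), and the corollary amounts to checking that these prefactors match.

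First I would invoke the preceding Lemma, namely~\eqref{eq:mom_Charl_eqmeas}, which already expresses the left-hand side---the $k$th moment of the random variable $U+2\sqrt{Uh}\,\xi+h$ appearing in~\eqref{eq:Led_1}---as
\be
\int_0^1 \int_{-1}^1 \left(u+2\sqrt{uh}\,s+h\right)^k \frac{\mathrm{d}s\,\mathrm{d}u}{\pi\sqrt{1-s^2}} = h^k\,{}_2F_1\!\left(\begin{matrix}-k,-k\\2\end{matrix};\tfrac{1}{h}\right).
\ee
Next I would evaluate the right-hand side by specialising Theorem~\ref{th:Poiss} to $t=1/h$: the first equality in~\eqref{eq:mom_Charl_Poi} gives $M^{\theta}(k;1/h)=\theta^k\,{}_2F_1\!\left(\begin{matrix}-k,-k\\2\end{matrix};\tfrac{1}{h}\right)$. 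Dividing by $(\theta/h)^k$ then cancels the factor $\theta^k$ and restores the factor $h^k$, producing exactly the expression on the right of the displayed Lemma identity. Since the two are term-by-term identical, the corollary follows at once.

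I do not expect any genuine obstacle in this final step: all of the analytic content sits upstream, in Exton's Lemma~\ref{lem:hyper} (which underpins Theorem~\ref{th:Poiss}) and in the Riordan-type binomial identity used to establish the Lemma. The only points requiring care are the correct tracking of the three powers $h^k$, $\theta^k$, $(\theta/h)^k$, and the observation that the argument $t=1/h$ is precisely the one corresponding, via the relation $\theta\sim N/t$ with $h=1/t$ recorded just before the Lemma, to the regime in which Ledoux's Theorem~\ref{thm:Charlier_eqmeas} identifies the equilibrium measure. It is worth emphasising that the identity is an \emph{exact} algebraic equality valid for all $\theta,h>0$, not merely an asymptotic statement; the equilibrium-measure interpretation supplies the motivation but plays no role in the verification itself.
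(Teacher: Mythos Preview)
Your proof is correct and follows exactly the same route as the paper: the corollary is obtained by directly comparing the hypergeometric expression from the Lemma~\eqref{eq:mom_Charl_eqmeas} with the formula~\eqref{eq:mom_Charl_Poi} of Theorem~\ref{th:Poiss} specialised at $t=1/h$, after which only the prefactors $h^k$, $\theta^k$, and $(\theta/h)^k$ need to be reconciled.
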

After rescaling, the factorial moments converge to the moments as $\theta \to \infty$. Therefore, the moments of the Poissonised Charlier mean spectral measure converge to the moments of $(U+2 \sqrt{Uh} \xi +h)$, which is consistent with Theorem~\ref{thm:Charlier_eqmeas}.

\subsection{Meixner ensemble} The equilibrium measure for the Meixner ensemble can be also expressed in terms of independent arcsine and uniform random variables. 
\begin{thm}[Ledoux~\cite{Ledoux05}]\label{xx}
 Consider the mean spectral measure of the Meixer ensemble $\widetilde{\rho}^{\gamma_N}_{N,q}$, and suppose that $\gamma_N \sim cN$ as $N \to \infty$. Then, as $N \to \infty$, $\widetilde{\rho}^{\gamma_N}_{N,q}$ converges weakly to the distribution of the random variable
\be
 \frac{1}{1-q} \left(U+ 2 \sqrt{qU(c+U)}\xi  + q(c+U) \right).
 \ee
\end{thm}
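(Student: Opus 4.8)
The plan is to prove the weak convergence by the method of moments, in direct parallel with the Charlier case treated above. Since $U\in(0,1)$ and $\xi\in(-1,1)$, the target random variable $Y=\frac{1}{1-q}\bigl(U+2\sqrt{qU(c+U)}\,\xi+q(c+U)\bigr)$ is bounded; hence its law is compactly supported and is therefore determined by its moments, and by the Fr\'echet--Shohat theorem it suffices to show that for each fixed $k$ the $k$th moment of $\widetilde{\rho}^{\gamma_N}_{N,q}$ converges to $\EE[Y^k]$ as $N\to\infty$ with $\gamma_N\sim cN$.

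First I would pass from the moments of the mean spectral measure to the factorial moments of Theorem~\ref{prop:Meixner}. Because $\de\widetilde{\rho}^{\gamma_N}_{N,q}(x)=\de\rho^{\gamma_N}_{N,q}(Nx)$, the $k$th moment of $\widetilde{\rho}^{\gamma_N}_{N,q}$ equals $N^{-k}\int y^k\,\de\rho^{\gamma_N}_{N,q}(y)$. Expanding $y^k$ in falling factorials $y(y-1)\cdots(y-j+1)$ via Stirling numbers, the leading term reproduces $M^{\gamma_N}_q(k,N)$, while the remaining terms involve $M^{\gamma_N}_q(j,N)$ with $j<k$; by the scaling obtained in the next step these are $O(N^{j})$, so that $N^{-k}\int y^k\,\de\rho^{\gamma_N}_{N,q} = N^{-k}M^{\gamma_N}_q(k,N)+O(1/N)$. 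The task thus reduces to computing $\lim_{N\to\infty} N^{-k}M^{\gamma_N}_q(k,N)$.

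Next I would read off the leading asymptotics directly from Ledoux's formula~\eqref{eq:Meixner_mom}. For large $l$ one has $\tfrac{l!}{(l-i)!}\sim l^{i}$ and $(\gamma_N+l)_{k-i}\sim(\gamma_N+l)^{k-i}$; inserting $\gamma_N\sim cN$ and $l=\lfloor Nu\rfloor$ turns the inner sum into a Riemann sum, giving
\ben
\frac{1}{N}\sum_{l=i}^{N-1}\frac{l!}{(l-i)!}(\gamma_N+l)_{k-i}\ \sim\ N^{k}\int_0^1 u^{i}(c+u)^{k-i}\,\de u ,
\een
and therefore
\ben
N^{-k}M^{\gamma_N}_q(k,N)\ \longrightarrow\ \sum_{i=0}^k\Big(\tfrac{q}{1-q}\Big)^{k}q^{-i}\binom{k}{i}^2\int_0^1 u^{i}(c+u)^{k-i}\,\de u .
\een
A dominated-convergence estimate confirms that the sub-leading corrections, and the $O(N^j)$ bounds invoked above, are negligible.

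Finally I would compute $\EE[Y^k]$ and identify it with this limit. Writing $Y=\frac{1}{1-q}\bigl(U(1+q)+qc + 2\sqrt{qU(c+U)}\,\xi\bigr)$, a binomial expansion together with the arcsine moments $\EE[\xi^{2j}]=4^{-j}\binom{2j}{j}$ and $\EE[\xi^{2j+1}]=0$ expresses $\EE[Y^k]$ as an explicit sum of integrals over $U\in(0,1)$; equivalently, conditioning on $U$ gives the Laplace--Legendre form $\EE[(a+b\xi)^k\mid U]=w^{k}P_k(a/w)$ with $w=\sqrt{a^2-b^2}$, and the short computation $a^2-b^2=\bigl(U(1-q)-qc\bigr)^2/(1-q)^2$ reveals exactly which square makes the Legendre representation close up (tying back to the Legendre polynomials of Theorem~\ref{thm:Meix}). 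It then remains to check that this expression equals the double sum of the previous step. I expect this reconciliation to be the main obstacle: it is a polynomial (hypergeometric) identity, the analogue of the Chu--Vandermonde/Riordan manipulation used in the Charlier lemma, now burdened by the extra factor $(c+u)^{k-i}$ and the $q$-dependence. As a consistency check, at $k=1$ both sides equal $\frac{1}{1-q}\bigl(qc+\tfrac{1+q}{2}\bigr)$.
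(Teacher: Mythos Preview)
The paper does not prove this statement: it is quoted as a result of Ledoux~\cite{Ledoux05} and used without proof, so there is no argument here to compare your proposal against. Your moment-method outline is the natural one, and the reduction to $\lim_{N\to\infty}N^{-k}M^{\gamma_N}_q(k,N)$ via~\eqref{eq:Meixner_mom} is carried out correctly.

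The only gap is the one you explicitly leave open, namely the identification
\[
\sum_{i=0}^k\Bigl(\frac{q}{1-q}\Bigr)^{k}q^{-i}\binom{k}{i}^2\int_0^1 u^{i}(c+u)^{k-i}\,\de u \;=\;\EE\bigl[Y^k\bigr].
\]
This is not the obstacle you fear; it closes with two ingredients already present in the paper. First, the Laplace integral $P_k(x)=\tfrac1\pi\int_0^\pi\bigl(x+\sqrt{x^2-1}\cos\varphi\bigr)^k\,\de\varphi$ gives $\EE\bigl[(a+b\xi)^k\mid U=u\bigr]=w^kP_k(a/w)$ with $a=u(1+q)+qc$, $b=2\sqrt{qu(c+u)}$ and $w=\lvert u(1-q)-qc\rvert$, as you observed. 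Second, the identity $\sum_{i=0}^k\binom{k}{i}^2 z^i=(1-z)^kP_k\bigl(\tfrac{1+z}{1-z}\bigr)$ is precisely the ${}_2F_1$/Legendre relation used in the proof of Theorem~\ref{thm:Meix}. Take $z=u/(q(c+u))$: then $\bigl(q(c+u)/(1-q)\bigr)^k(1-z)^k=\bigl((qc-u(1-q))/(1-q)\bigr)^k$ and $(1+z)/(1-z)=a/(qc-u(1-q))$, and the parity $P_k(-x)=(-1)^kP_k(x)$ collapses both signs of $qc-u(1-q)$ to $(1-q)^{-k}w^kP_k(a/w)=\EE\bigl[Y^k\mid U=u\bigr]$. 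Integrating over $u\in(0,1)$ completes the match, and with it your proof.
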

Here we consider the special case $\gamma=1$, that is $c=0$ (in fact for any fixed $\gamma$ this is the case), when the limiting random variable takes the simpler form of $U(1+q+2\sqrt{q} \xi)(1-q)^{-1}$. As in the Charlier case, we now calculate the $k$th moment of this random variable.
\begin{lem}\label{lem:Meix_mom_eqmeas}
The $k$th moment of the random variable $ \frac{1}{1-q}(U(1+q+2\sqrt{q} \xi))$ is given by
\be
 \iint \left( \frac{u(1+q+2\sqrt{q} s)}{1-q} \right)^k \frac{\mathrm{d} s \mathrm{d} u}{\pi \sqrt{1-s^2}}  =  \frac{1}{k+1} \left( \frac{q}{1-q} \right)^k {}_2F_1 \left( \begin{matrix} -k , -k \\ 1 \end{matrix} ; \frac{1}{q} \right)  .
 \ee
\end{lem}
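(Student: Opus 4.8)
The plan is to exploit the product structure of the limiting random variable. Since $\xi$ and $U$ are independent and the variable is $\tfrac{1}{1-q}U(1+q+2\sqrt{q}\xi)$, a genuine product, the $k$th moment factorises:
\[
\iint \left(\frac{u(1+q+2\sqrt{q}s)}{1-q}\right)^k \frac{\de s\,\de u}{\pi\sqrt{1-s^2}} = \frac{1}{(1-q)^k}\left(\int_0^1 u^k\,\de u\right)\left(\int_{-1}^1 (1+q+2\sqrt{q}s)^k\frac{\de s}{\pi\sqrt{1-s^2}}\right).
\]
The uniform factor is immediate, $\int_0^1 u^k\,\de u = 1/(k+1)$, so everything reduces to evaluating the arcsine moment $\int_{-1}^1 (1+q+2\sqrt{q}s)^k\,\de s/(\pi\sqrt{1-s^2})$. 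This is the decoupled analogue of the genuinely two-dimensional integral that appears in the Charlier lemma, and the decoupling is what makes the Meixner case cleaner.

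Second, I would compute the arcsine moment exactly as in the Charlier proof: substitute $s=\sin x$ to rewrite it as $\frac{1}{\pi}\int_{-\pi/2}^{\pi/2}(1+q+2\sqrt{q}\sin x)^k\,\de x$, expand by the binomial theorem in powers of $\sin x$, and apply the sine integral formula~\eqref{eq:sin}, which annihilates the odd powers. Only the even exponents $j=2m$ survive, and after the simplification $(2\sqrt{q})^{2m}/2^{2m}=q^m$ this leaves the finite sum
\[
\int_{-1}^1 (1+q+2\sqrt{q}s)^k\frac{\de s}{\pi\sqrt{1-s^2}} = \sum_{m=0}^{\lfloor k/2\rfloor}\binom{k}{2m}\binom{2m}{m}(1+q)^{k-2m}q^m.
\]

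Finally, I would match this against the claimed hypergeometric form. Expanding the terminating series gives ${}_2F_1\!\left(\begin{smallmatrix}-k,-k\\1\end{smallmatrix};1/q\right)=\sum_{i=0}^k\binom{k}{i}^2 q^{-i}$, so the right-hand side of the lemma equals $\frac{1}{(k+1)(1-q)^k}\sum_{i=0}^k\binom{k}{i}^2 q^i$ after relabelling $i\mapsto k-i$. It therefore remains to establish the combinatorial identity $\sum_{m}\binom{k}{2m}\binom{2m}{m}(1+q)^{k-2m}q^m = \sum_i\binom{k}{i}^2 q^i$, which I regard as the crux of the argument. The cleanest route, and the one I would take, is to recognise both sides as the coefficient of $x^k$ in $\big((x+1)(x+q)\big)^k=(x^2+(1+q)x+q)^k$: expanding the product $(x+1)^k(x+q)^k$ and collecting the $x^k$ term produces $\sum_i\binom{k}{i}^2 q^i$, whereas expanding the trinomial $(x^2+(1+q)x+q)^k$ and imposing $2a+b=k$ on the exponents (which forces the multinomial index to be $(a,k-2a,a)$) produces $\sum_m\binom{k}{2m}\binom{2m}{m}(1+q)^{k-2m}q^m$. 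Equating the two expansions closes the proof. The only real work lies in the bookkeeping of this trinomial step, which here replaces the Riordan identity invoked in the Charlier lemma with a one-line generating-function observation.
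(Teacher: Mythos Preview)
Your proof is correct and follows the same outline as the paper: factorise the $u$- and $s$-integrals, substitute $s=\sin x$, binomially expand, and use~\eqref{eq:sin} to reduce to the finite sum $\sum_m\binom{k}{2m}\binom{2m}{m}(1+q)^{k-2m}q^m$, which is exactly the paper's $\sum_j\frac{k!}{(k-2j)!(j!)^2}(1+q)^{k-2j}q^j$.

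The only difference is the final step. The paper simply asserts that this sum equals $q^k\,{}_2F_1\bigl(\begin{smallmatrix}-k,-k\\1\end{smallmatrix};1/q\bigr)$ without argument, whereas you supply a proof: both $\sum_m\binom{k}{2m}\binom{2m}{m}(1+q)^{k-2m}q^m$ and $\sum_i\binom{k}{i}^2 q^i$ are the coefficient of $x^k$ in $\bigl((x+1)(x+q)\bigr)^k=(x^2+(1+q)x+q)^k$, the first from the trinomial expansion with $2a+b=k$, the second from the product expansion. This generating-function observation is clean and self-contained, and it closes a step the paper leaves implicit.
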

\begin{proof}
The $k$th moment is the product
$$ \int_0^1 \left( \frac{u}{1-q} \right)^k \mathrm{d} u \int_{-1}^1 \frac{(1+q+2 \sqrt{q} s)^k}{\pi \sqrt{1-s^2}} \mathrm{d} s, 
$$
where the first integral is $(1-q)^{-k}(k+1)^{-1}$, and the second can be transformed by a substitution and binomial expansion to
$$ \frac{1}{\pi} \int_{-\frac{\pi}{2}}^{\frac{\pi}{2}} (1+q+2 \sqrt{q} \sin{x})^k \mathrm{d}x =
\frac{1}{\pi} \sum_{j=0}^k \binom{k}{j} (1+q)^{k-j} 2^j q^{\frac{j}{2}} \int_{-\frac{\pi}{2}}^{\frac{\pi}{2}} \sin^j{(x)} \mathrm{d}x . 
$$
We can once again use \eqref{eq:sin} to see that
\begin{align*} \frac{1}{\pi} \int_{-\frac{\pi}{2}}^{\frac{\pi}{2}} (1+q+2 \sqrt{q} \sin{x})^k \mathrm{d}x &= 
\sum_{j=0, j \text{ even }}^k \frac{k!}{(k-j)! (\frac{j}{2})!(\frac{j}{2})!}
(1+q)^{k-j} q^{\frac{j}{2}} \\
&=  (1+q)^k\sum_{j=0}^{\lfloor \frac{k}{2} \rfloor} \frac{k!}{(k-2j)!(j!)^2}
(1+q)^{-2j} q^j, 
\end{align*}
and the sum is equal to $ \left( \frac{q}{1+q} \right)^k {}_2F_1 \left( \begin{matrix} -k , -k \\ 1 \end{matrix} ; \frac{1}{q} \right)$.
\end{proof}

In the Meixner case, the connection between factorial moments and moments of the equilibrium measure is not as straightforward as in the Charlier ensemble. Nevertheless, there is a natural way to obtain a match via a $2$-negative binomialised $(N-1)$, weighting the factorial moments, and then taking the limit $t\to1/q$. We state this as a corollary of Theorem~\ref{thm:Meixner}.

\begin{cor}\label{cor:Meix_mom_eqmeas}
Consider the Meixner ensemble with parameters $\gamma=1$ and $0<q<1$, and $k$th factorial moments $M^{1}_q(k,N)$. Suppose that $(N-1)$ has $2$-negative binomial distribution~\eqref{eq:N_Negbin}. Then,
\be
 \iint \left( \frac{u(1+q+2\sqrt{q} s)}{1-q} \right)^k \frac{\mathrm{d} s \mathrm{d} u}{\pi \sqrt{1-s^2}}  =\lim_{t \to \frac{1}{q}} \int \frac{M^{1}_q(k,m+1)}{(m+2)_k} \mathrm{d} \mu_{tq}^2(m) .
 \ee
\end{cor}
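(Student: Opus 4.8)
The plan is to reduce the identity to the hypergeometric evaluation already recorded in Lemma~\ref{lem:Meix_mom_eqmeas} and then to carry out a single Abel-type limit. By Lemma~\ref{lem:Meix_mom_eqmeas} the left-hand side equals $\frac{1}{k+1}\left(\frac{q}{1-q}\right)^{k}{}_2F_1\!\left(\begin{smallmatrix}-k,-k\\1\end{smallmatrix};\frac{1}{q}\right)$, so it suffices to show that the right-hand side has the same value. The first step is to rewrite the integrand via Theorem~\ref{thm:Meixner}: putting $N=m+1$ and observing that $(m+2)_k=(N+1)_k$, the $(N+1)_k$ prefactor in the moment formula cancels and one is left with
\[
\frac{M^{1}_q(k,m+1)}{(m+2)_k}=\left(\frac{q}{1-q}\right)^{k}\frac{1}{k+1}\;{}_3F_2\!\left(\begin{matrix}-k,-k,-m\\1,-m-1-k\end{matrix};\frac{1}{q}\right).
\]
Thus the corollary is equivalent to the assertion that $\displaystyle\lim_{t\to1/q}\int{}_3F_2\!\left(\begin{smallmatrix}-k,-k,-m\\1,-m-1-k\end{smallmatrix};\frac{1}{q}\right)\de\mu^{2}_{tq}(m)={}_2F_1\!\left(\begin{smallmatrix}-k,-k\\1\end{smallmatrix};\frac{1}{q}\right)$.

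Next I would expand the ${}_3F_2$ as the finite sum over $i=0,\dots,k$ whose $i$th coefficient is $\frac{(-k)_i(-k)_i}{(1)_i\,i!}q^{-i}=\binom{k}{i}^2q^{-i}$, multiplying the ratio $\frac{(-m)_i}{(-m-1-k)_i}$, and integrate term by term against $\de\mu^{2}_{tq}(m)=(m+1)(tq)^m(1-tq)^2$. Since the $i$-sum is finite and each $m$-series converges absolutely for $tq<1$, the two summations may be interchanged, and the problem collapses to understanding
\[
S_i(t):=(1-tq)^2\sum_{m\geq0}(m+1)(tq)^m\,\frac{(-m)_i}{(-m-1-k)_i}
\]
as $t\to1/q$. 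An elementary factorial identity gives $\frac{(-m)_i}{(-m-1-k)_i}=\frac{m!\,(m+k+1-i)!}{(m-i)!\,(m+k+1)!}$ (with the terms $m<i$ vanishing), and this is a quotient of two monic degree-$i$ polynomials in $m$, hence tends to $1$ as $m\to\infty$.

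The crux of the argument, and the only genuinely analytic step, is the passage to the limit in $S_i(t)$. Writing $x=tq$ and $a_m=\frac{(-m)_i}{(-m-1-k)_i}$, I would use the generating-function value $(1-x)^2\sum_{m\geq0}(m+1)x^m=1$ to split
\[
S_i(t)=1+(1-x)^2\sum_{m\geq0}(m+1)x^m\,(a_m-1).
\]
Because $a_m-1\to0$, a standard Abel/Tauberian estimate disposes of the remainder: given $\varepsilon>0$, choose $M$ with $|a_m-1|<\varepsilon$ for $m>M$; the tail is then bounded by $\varepsilon(1-x)^2\sum_{m\geq0}(m+1)x^m=\varepsilon$, while the finite head is killed by the factor $(1-x)^2$ as $x\to1^{-}$. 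Hence $S_i(t)\to1$ for every $i$. Conceptually this is exactly the statement that $N=m+1\to\infty$ in distribution as $tq\to1^{-}$, which is why the equilibrium measure surfaces in the limit.

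Summing over the finitely many values of $i$ and restoring the prefactor then yields $\displaystyle\lim_{t\to1/q}\int{}_3F_2(\cdots)\,\de\mu^{2}_{tq}(m)=\sum_{i=0}^{k}\binom{k}{i}^2q^{-i}={}_2F_1\!\left(\begin{smallmatrix}-k,-k\\1\end{smallmatrix};\frac{1}{q}\right)$, so the right-hand side equals $\frac{1}{k+1}\left(\frac{q}{1-q}\right)^{k}{}_2F_1\!\left(\begin{smallmatrix}-k,-k\\1\end{smallmatrix};\frac{1}{q}\right)$, which is the left-hand side by Lemma~\ref{lem:Meix_mom_eqmeas}. The main obstacle is precisely the Tauberian interchange of limit and summation inside $S_i(t)$; everything else is bookkeeping with the hypergeometric and factorial identities already established in the paper.
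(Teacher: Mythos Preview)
Your argument is correct. Both you and the paper start from Theorem~\ref{thm:Meixner}, expand the ${}_3F_2$ as the finite sum $\sum_{i=0}^k\binom{k}{i}^2q^{-i}\,a_m$ with $a_m=\frac{(-m)_i}{(-m-1-k)_i}$, and swap the (finite) $i$--sum with the $m$--integration, so the task in either proof is to evaluate $S_i(t)=(1-tq)^2\sum_{m\ge0}(m+1)(tq)^m a_m$ as $tq\to1^-$. The paper proceeds algebraically: it shifts $m\mapsto l+i$, recognises the $l$--series as a constant multiple of ${}_2F_1(k+2,i+2;k+i+2;tq)$, applies Euler's transformation to absorb the factor $(1-tq)^2$ into ${}_2F_1(i,k;k+i+2;tq)$, and then invokes Gauss's summation at $tq=1$; the resulting Gamma quotient cancels the prefactor exactly, leaving $S_i(t)\to1$. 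Your route is analytic: since $a_m$ is a ratio of two monic degree--$i$ polynomials in $m$ and hence $a_m\to1$, an Abel--type head/tail split against the probability weights $(1-x)^2(m+1)x^m$ gives $S_i(t)\to1$ directly. Your approach is more elementary and conceptually transparent (it makes visible that the limit $tq\to1$ forces $N\to\infty$, which is why the equilibrium moments appear), and it would apply verbatim to any coefficients with $a_m\to1$. The paper's approach, on the other hand, yields an exact closed form for $S_i(t)$ at every $t<1/q$, not merely the limiting value. One terminological quibble: the limiting step is Abelian rather than Tauberian.
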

\begin{proof} From the explicit formula~\eqref{eq:Meix_fact} we get
\begin{align*} _{}
 \int \frac{M^{1}_q(k,m+1)}{(m+2)_k} \mathrm{d} \mu_{tq}^2(m) 
&= \sum_{m=0}^{\infty} \sum_{j=0}^k \binom{k}{j}^2 \frac{(-m)_j}{(-m-k-1)_j} (tq)^m (1-tq)^2 (m+1) q^{-j} \\
&=  \sum_{j=0}^k \binom{k}{j}^2 t^j (1-tq)^2 \sum_{l=0}^{\infty} \frac{(-l-j)_j}{(-l-j-k-1)_j} (l+j+1) (tq)^l \\
&=  \sum_{j=0}^k \binom{k}{j}^2 t^j (1-tq)^2 \sum_{l=0}^{\infty} \frac{(l+2)_j}{(l+k+2)_j} (l+1) (tq)^l.
\end{align*}
The $l$ sum is 
\begin{align*} \sum_{l=0}^{\infty} \frac{(l+2)_j (2)_l}{(l+k+2)_j l!} (tq)^l
= \frac{(j+1)! (k+1)!}{(j+k+1)!} {}_2F_1 \left( \begin{matrix} k+2 , j+2  \\ k+j+2 \end{matrix} ; tq \right).
\end{align*}
By Euler's hypergeometric transformation,
\begin{equation*}
(1-tq)^2 {}_2F_1 \left( \begin{matrix} k+2 , j+2  \\ k+j+2 \end{matrix} ; tq \right) = {}_2F_1 \left( \begin{matrix} j , k  \\ k+j+2 \end{matrix} ; tq \right),
\end{equation*}
and by Gauss' hypergeometric theorem, the function on the right hand side is equal to $\frac{(j+k+1)!}{(j+1)!(k+1)!}$ if we take $tq \to 1$.
We therefore have the limit
$$
 \lim_{t \to \frac{1}{q}}(1-tq)^2 {}_2F_1 \left( \begin{matrix} k+2 , j+2  \\ k+j+2 \end{matrix} ; tq \right) = \frac{(j+k+1)!}{(j+1)!(k+1)!} . 
 $$
The factorial terms cancel and we get
\[
\frac{1}{k+1} \left( \frac{q}{1-q} \right)^k \sum_{j=0}^k \binom{k}{j}^2 \frac{1}{q^j}=\frac{1}{k+1} \left( \frac{q}{1-q} \right)^k {}_2F_1 \left( \begin{matrix} -k , -k \\ 1 \end{matrix} ; \frac{1}{q} \right), 
\]
which is exactly the hypergeometric representation of Lemma~\ref{lem:Meix_mom_eqmeas}.
\end{proof}

The above corollary is consistent with Theorem~\ref{xx}, which gives
the equilibrium measure for the Meixner ensemble with $\gamma=1$ (i.e. $c=0$). 
Note that, in this scaling limit, the 2-negative binomial with parameter $tq$,
multiplied by $(1-tq)$, converges in law as $t\to 1/q$,
to a standard gamma random variable with parameter $2$.

\begin{rem}
When $c > 0$, the moments of the limiting distribution become
\be
\sum_{j=0}^{\lfloor \frac{k}{2} \rfloor} \sum_{l=0}^{k-2j} \sum_{m=0}^j \frac{k! c^{l+m}}{l! j! m! (j-m)! (k-2j-l)!(k-m-l+1)}\frac{q^{j+l}}{(1+q)^{2j+l}}. 
\ee
We have been unable to find a hypergeometric representation for this. If it were possible, it would suggest how (if possible at all) to randomise $N$ in a way to obtain tractable results and recover the equilibrium measure for general $\gamma\sim cN$.

Similarly, in the Krawtchouk ensemble for $K \sim \kappa N$, with $\kappa>1$ and $0<p<1$, the mean spectral measure converges to the distribution of~\cite{Ledoux05} 
\be
(1-p)U+2 \sqrt{p(1-p)U(\kappa-U)} \xi + p(\kappa-U).
\label{eq:Kraw_mom_eqmeas}
\ee
We could calculate the moments of~\eqref{eq:Kraw_mom_eqmeas} as 
\be
 \sum_{j=0}^{\lfloor \frac{k}{2} \rfloor} \sum_{l=0}^{k-2j} \sum_{m=0}^j 
\frac{p^kk!(-1)^m \kappa^{k-j-l-m}}{l! j! m! (j-m)! (k-2j-l)!(j+l+m+1)} \left( \frac{1-p}{p} \right)^j \left( \frac{1-2p}{p} \right)^l, \ee
but, as in the general Meixner case, we have not been able to properly identify a useful hypergeometric representation. 
\end{rem}


\begin{thebibliography}{99}


\bibitem{Albeverio01} S. Albeverio, L. Pastur and M. Shcherbina,
\emph{On the $1/n$ Expansion for Some Unitary Invariant Ensembles of Random Matrices},
Comm. Math. Phys. {\bf 224}, (2001) 271-305. \MR{1869000}


\bibitem{Ambjorn90} 
J. Ambj\o rn, J. Jurkiewicz and Yu. M. Makeenko, 
\emph{Multiloop correlators for two-dimensional quantum gravity},
 Phys. Lett. B {\bf 251}, (1990), 517-524. \MR{1084539}
 
\bibitem{Ambjorn93}  J.~Ambj{\o}rn, L.~Chekhov, C.~Kristjansen, and Y.~Makeenko.
\emph{Matrix model calculations beyond the spherical limit},
Nuc. Phys. B {\bf 404}, (1993), 127-172. \MR{1232337}
 
 \bibitem{Biane} Ph. Biane,
 Representations of symmetric groups and free probability,
 {\em Adv. Math.} {\bf 138}, (1998), 126-181.  \MR{1644993}
 
\bibitem{BG}
G.~Borot and A.~Guionnet,
\emph{Asymptotic expansion of {$\beta$} matrix models in the one-cut
  regime},
Comm. Math. Phys. {\bf 317}(2), (2013),  447-483. \MR{3010191}

\bibitem{BO1}
A.~Borodin and G.Olshanski, \emph{Asymptotics of Plancherel-type random
partitions}, Journal of Algebra {\bf 313}(1), (2007),  40–60. \MR{2326137}

\bibitem{BO2}
A.~Borodin and G.~Olshanski,
\emph{Meixner polynomials and random partitions}, Moscow Math. J. {\bf 6}(4), (2006), 629–655. \MR{2291156}

\bibitem{CE}
L.~Chekhov and B.~Eynard,
\emph{Matrix eigenvalue model: Feynman graph technique for all genera},
J. High Energy Phys. {\bf 12}, (2006), 026. \MR{2276715}

\bibitem{Collins03} B. Collins,
\emph{Moments and cumulants of polynomial random variables on unitary groups, the Itzykson-Zuber integral, and free probability}, 
Int. Math. Res. Not. {\bf 17}, (2003), 953-982. \MR{1959915}

\bibitem{Collins14} B. Collins, S. Matsumoto and N. Saad,
\emph{Integration of invariant matrices and moments of
inverses of Ginibre and Wishart matrices} 
J. Multivariate Anal. {\bf 126}, (2014), 1-13. \MR{3173078}
 
\bibitem{Cunden16b}
 F.~D. Cunden, F. Mezzadri, N. J. Simm and P. Vivo,
\emph{Large-$N$ expansion for the time-delay matrix of ballistic chaotic cavities},
J. Math. Phys. {\bf 57}, (2016), 111901. \MR{3567844}

\bibitem{Cunden19}   F.~D.  Cunden, F. Mezzadri, N. O'Connell and N. Simm,
\emph{Moments of random matrices and hypergeometric orthogonal polynomials},
Comm. Math. Phys. {\bf 369}, (2019), 1091-1145. \MR{3975863}

\bibitem{Cunden19b}  F.~D.  Cunden, A. Dahlqvist, N. O'Connell,
\emph{Integer moments of complex Wishart matrices and Hurwitz numbers},
Ann. Henri Poincar\'e D (2020), to appear.

\bibitem{Dyson62} 
F. J. Dyson,
\emph{Statistical theory of the energy levels of complex systems. I}, 
J. Math. Phys. {\bf 3}, (1962), 140-156. \MR{0143556}

\bibitem{Dragnev97} P. D. Dragnev, E. B. Saff,
\emph{Constrained energy problems with applications to orthogonal polynomials of the discrete variables}, 
J. Anal. Math. {\bf 72}, (1997), 223-259. \MR{1482996}

\bibitem{Dubrovin16} M. Dubrovin and D. Yang,
\emph{Generating series for GUE correlators}, 
Lett. Math. Phys. {\bf 107}, (2017), 1971-2012. \MR{3714772}

\bibitem{Ercolani03} 
N.M. Ercolani, K.D.T.-R Mclaughlin,
\emph{Asymptotics of the partition function for random matrices via Riemann-Hilbert techniques and applications to graphical enumeration},
 Int. Math. Res. Not. {\bf 2003}(14), (2003), 755-820. \MR{1953782}

\bibitem{Exton98} 
H. Exton,
\emph{Four-term recurrence relations for hypergeometric functions of the second order II},
Collect. Math. {\bf 49}, (1998), 43-51. \MR{1629762}

\bibitem{Eynard15} 
B. Eynard, T. Kimura, S. Ribault,
\emph{Random matrices}.
\arXiv{1510.04430}

\bibitem{Eynard16} B. Eynard,
\emph{Counting surfaces}, volume 70 of Progress in Mathematical Physics.
Birkh\"auser/Springer, CRM Aisenstadt chair lectures, 2016. \MR{3468847}

\bibitem{Forrester_book} P.~J. Forrester,
\emph{Log-Gases and Random Matrices}, London
Mathematical Society, London, 2010. \MR{2641363}


\bibitem{Forrester17} P.~J. Forrester, A. A. Rahman and  N. S. Witte,
\emph{Large $N$ expansions for the Laguerre and Jacobi $\beta$-ensembles from the loop equations},
J. Math. Phy. {\bf 58}, (2017), 113303. \MR{3725731}

\bibitem{Graczyk03} P. Graczyk, G. Letac, and H. Massam,
\emph{The complex Wishart distribution and the symmetric group},
Ann. Statist. {\bf 31}(1), (2003), 287-309. \MR{1962508}

\bibitem{Guionnet15} 
 A. Guionnet and J. Novak,
 \emph{Asymptotics of unitary multimatrix models: the Schwinger-Dyson lattice and topological recursion}, 
 J. Funct. Anal. {\bf 268}(10), (2015), 2851-2905. \MR{3331788}
 
 \bibitem{Guo00}  
B. Y. Guo, J. Shen, Z. Q. Wang, 
\emph{A Rational Approximation and Its Applications to Differential Equations on the Half Line},
J. Sci. Computing {\bf 15}, (2000), 117. \MR{1827574}

\bibitem{Haagerup03} U. Haagerup and S. Thorbj\o rnsen,
\emph{Random Matrices with Complex Gaussian Entries},
Expo. Math. {\bf 21}, (2003), 293-337. \MR{2022002}

\bibitem{Haagerup12} 
U. Haagerup and S. Thorbj\o rnsen,,
\emph{Asymptotic expansions for the Gaussian Unitary Ensemble},
Infin. Dimens. Anal. Quantum Probab. Relat. Top. {\bf 15}(1), (2012), 1250003. \MR{2922846}

\bibitem{Hanlon92} P. J. Hanlon, R. P. Stanley and J. R. Stembridge,
\emph{Some Combinatorial Aspects of the Spectra of Normally Distributed Random Matrices},
Contemp. Math. {\bf 138}, (1992), 151-174. \MR{1199126}

\bibitem{Harer86} J. Harer and D. Zagier, 
\emph{The Euler characteristic of the moduli space of curves},
Invent. Math. {\bf 85},  (1986), 457-485. \MR{0848681}

\bibitem{Johansson99} 
K. Johansson, 
\emph{Discrete orthogonal polynomial ensembles and the Plancherel measure}, 
Ann. Math. (2) {\bf 153}, (2001), 259–296. \MR{1826414}

\bibitem{Johansson00} 
K. Johansson, 
\emph{Shape fluctuations and random matrices}, 
Commun. Math. Phys. {\bf 209}(2), (2000), 437-476. \MR{1737991}

\bibitem{Johansson02} 
K. Johansson, 
\emph{Non-intersecting paths, random tilings and random matrices}, 
Probab. Theory  Relat. Fields {\bf 123}(2), (2002), 225-280. \MR{1900323}


\bibitem{Koekoek10} R. Koekoek, P. A. Lesky and R. F. Swarttouw,
\emph{Hypergeometric Orthogonal Polynomials and Their $q$-Analogues}, Springer Monographs in Mathematics, Springer-Verlag Berlin Heidelberg, 2010. \MR{2656096}

\bibitem{Konig02}
W. K\"onig, N. O'Connell, and S. Roch,
\emph{Non-colliding random walks, tandem queues, and discrete orthogonal polynomial ensembles},
Electron. J. Probab. {\bf 7}, (2002), 24. \MR{1887625}


\bibitem{Konig05} W. K\"onig,
\emph{Orthogonal polynomial ensembles in probability theory},
Probability Surveys {\bf 2}, (2005), 385-447. \MR{2203677}

\bibitem{Kopelevitch18}
O. Kopelevitch,
\emph{A Convergent $1/N$ Expansion for GUE},
Ann. Henri Poincar\'e {\bf 19}, (2018), 3883-3899. \MR{3877519}

\bibitem{Kuijlaars99} A. Kuijlaars, W. Van Assche,
\emph{The asymptotic zero distribution of orthogonal polynomials
with varying recurrence coefficients}, 
J. Approx. Theor. {\bf 99}, (1999), 167-197. \MR{1696553}

\bibitem{Ledoux04} M. Ledoux,
\emph{Differential operators and spectral distributions of invariant ensembles from the classical orthogonal polynomials. The continuous case},
Elec. J. Probab. {\bf 9}, (2004), 177-208. \MR{2041832}

\bibitem{Ledoux05} M. Ledoux,
\emph{Differential operators and spectral distributions of invariant ensembles from the
classical orthogonal polynomials. The discrete case}, 
Elect. J. Probab. {\bf 10}, (2005), 1116-1146. \MR{2164041}

\bibitem{Ledoux09} M. Ledoux,
\emph{A recursion formula for the moments of the Gaussian orthogonal ensemble},
 Annales de l'Institut Henri Poincar\'e - Probabilit\'es et Statistiques, Vol. {\bf 45}, No. 3, (2009), 754-769. \MR{2548502}
 
 \bibitem{Macdonald} I. G. Macdonald,
 \emph{Symmetric Functions and Hall Polynomials},
 2nd ed. Oxford Univ. Press, 1995. \MR{1354144}
 
 \bibitem{Mehta60}
 M.~L.~Mehta,
 \emph{On the statistical properties of the level-spacings in nuclear spectra}, 
 Nucl. Phys. {\bf 18}, (1960), 395-419. \MR{0112645}
 
 \bibitem{Mehta67}
M.~L.~Mehta,
\emph{Random Matrices}, 3rd ed.
Academic Press, 2004. \MR{2129906}

\bibitem{Mingo17}
J. Mingo and R. Speicher,
\emph{Free Probability and Random Matrices},
Fields Institute Monographs 35, Springer, New York; Fields Institute for Research in Mathematical Sciences, Toronto, ON, 2017. \MR{3585560}

\bibitem{OC03}
N. O'Connell.
\emph{Conditioned random walks and the RSK correspondence.}
J. Phys. A {\bf 36}, (2003), 3049-3066. \MR{1986407}

\bibitem{Riordan}
J. Riordan.
\emph{An Introduction to Combinatorial Analysis.}, 4th ed.
John Wiley, New York, 1958. \MR{0096594}


\bibitem{Saff97} E. B. Saff, V. Totik, 
\emph{Logarithmic potentials with external fields}, 
Springer, 1997. \MR{1485778}

\bibitem{Witte14} N.~S. Witte and P.~J. Forrester,
\emph{Moments of the Gaussian $\beta$ ensembles and the large-$N$ expansion of the densities},
J. Math. Phys. {\bf 55}, (2014), 083302. \MR{3390734}

\end{thebibliography}
  \end{document}